\documentclass[oneside,english,reqno]{amsart}
\usepackage{geometry}
\geometry{
	left   = 1.63in,
	right  = 1.63in,
	top = 1.63in,
	bottom = 1.63in,
}

\usepackage{babel}
\usepackage{enumitem}
\usepackage{amssymb,amsmath,amsfonts,latexsym}
\usepackage{bbm}
\usepackage{appendix}
\usepackage{hyperref}
\usepackage{color}
\makeatletter
\numberwithin{equation}{section}
\numberwithin{figure}{section}
\theoremstyle{plain}
\newtheorem{theorem}{\protect\theoremname}
\theoremstyle{corollary}

\theoremstyle{remark}
\newtheorem{remark}[theorem]{\protect\remarkname}
\theoremstyle{plain}
\newtheorem{lemma}[theorem]{\protect\lemmaname}
\newlist{casenv}{enumerate}{4}
\setlist[casenv]{leftmargin=*,align=left,widest={iiii}}
\setlist[casenv,1]{label={{\itshape\ \casename} \arabic*.},ref=\arabic*}
\setlist[casenv,2]{label={{\itshape\ \casename} \roman*.},ref=\roman*}
\setlist[casenv,3]{label={{\itshape\ \casename\ \alph*.}},ref=\alph*}
\setlist[casenv,4]{label={{\itshape\ \casename} \arabic*.},ref=\arabic*}

\makeatother

\providecommand{\corollaryname}{Corollary}
\providecommand{\lemmaname}{Lemma}
\providecommand{\remarkname}{Remark}
\providecommand{\casename}{Case}
\providecommand{\theoremname}{Theorem}

\begin{document}

\title[Blow-up profile of neutron stars in the Chandrasekhar theory]{Blow-up profile of neutron stars \\ in the Chandrasekhar theory}

\author{Dinh-Thi Nguyen}
\address{Dinh-Thi Nguyen, Mathematisches Institut, Ludwig--Maximilians--Universit\"at M\"unchen, Theresienstr. 39, 80333 M\"unchen, Germany.} 
\email{nguyen@math.lmu.de}

	\subjclass{49J40}
	\keywords{Chandrasekhar limit, Chandrasekhar theory, gravitational interaction, Lane--Emden solution, mass concentration, minimizers, neutron stars, Thomas--Fermi theory}

	\maketitle

\begin{abstract}
We study the Chandrasekhar variational model for neutron stars, with or without an external potential. We prove the existence of minimizers when the attractive interaction strength $\tau$ is strictly smaller than the Chandrasekhar limit $\tau_c$ and investigate the blow-up phenomenon in the limit $\tau\uparrow \tau_{c}$. We show that the blow-up profile of the minimizer(s) is given by the Lane--Emden solution.
\end{abstract}

\section{Introduction}\label{intro}

It is a fundamental fact that a neutron star {\em collapses} when its mass is bigger than a critical number. The maximum mass of a stable star, called the \emph{Chandrasekhar limit}, was computed by Chandrasekhar in 1930 \cite{Ch-31}, which earned him the 1983 Nobel Prize in Physics.  In this paper, we study the details of the collapse phenomenon within the semi-classical approximation. 

From first principles of quantum mechanics, a neutron star is a system of identical, relativistic fermions interacting via the self-gravitational force. In the Chandrasekhar  theory, the ground state energy of a neutron star is given by
\begin{equation}\label{eq:neutron star energy}
E_{\tau}(1):=\inf\left\{ \mathcal{E}_{\tau}(\rho): 0\leq\rho\in L^{1}\cap L^{\frac{4}{3}}(\mathbb{R}^{3}),\int_{\mathbb{R}^3} \rho(x){\rm d}x=1\right\},
\end{equation}
with the energy functional
\begin{equation} \label{eq:TF-functional}
\mathcal{E}_{\tau}(\rho):=\int_{\mathbb{R}^3}j_{m}(\rho(x)){\rm d}x - \frac{\tau}{2}\iint_{\mathbb{R}^{3}\times\mathbb{R}^{3}}\frac{\rho(x)\rho(y)}{\left|x-y\right|}{\rm d}x{\rm d}y + \int_{\mathbb{R}^3}V(x)\rho(x){\rm d}x.
\end{equation}
Here $\rho$ is the density of the system and  $\tau>0$ stands for the interaction strength. The functional $j_{m}(\rho)$ is the semi-classical approximation for the  relativistic kinetic energy at density $\rho$, namely
\begin{align*}
	j_{m}(\rho)&=\frac{q}{(2\pi)^3}\int_{|p|<(6\pi^2\rho/q)^{\frac{1}{3}}}\sqrt{|p|^2+m^2}{\rm d}p\\
	& = \frac{q}{16\pi^2}\left[\eta(2\eta^2+m^2)\sqrt{\eta^2+m^2}-m^4\ln{\left(\frac{\eta+\sqrt{\eta^2+m^2}}{m}\right)}\right], \quad \eta=\left(\frac{6\pi^2\rho}{q} \right)^{\frac{1}{3}}.
\end{align*}
The mass $m> 0$ and the spin number $q\in \mathbb{N}$ will be fixed. Moreover, $V:\mathbb{R}^3\to \mathbb{R}$ stands for a general external potential; in the translation-invariant case $V\equiv0$ we will denote the corresponding energy functional and the ground state energy by $\mathcal{E}^{\infty}_{\tau}(\rho)$ and $E_{\tau}^{\infty}(1)$, respectively. 

The Chandrasekhar theory is the relativistic analogue of the famous Thomas--Fermi theory of non-relativistic electrons in atomic physics \cite{Fe-27,Th-27}. The rigorous derivation of the Chandrasekhar functional $\mathcal{E}_{\tau}(\rho)$ from many-body quantum theory has been done by Lieb and Yau in \cite{LiYa-87} (see \cite{LiTh-84} for an earlier work and \cite{FoLeSo-15} for a new approach). More precisely, they proved the validity of the Chandrasekhar theory from the $N$-body Schr\"odinger theory  in the limit of large $N$ with $\tau=gN^{\frac{2}{3}}$ kept fixed, where $g$ is Newton's gravitational constant. Their result holds under the condition that $\tau$ is strictly smaller than the Chandrasekhar limit $\tau_c$, which is described below. 

In the Chandrasekhar theory, the stellar collapse of big neutron stars boils down to the fact that  $E_{\tau}(1)=-\infty$ if $\tau>\tau_c$, where $\tau_c$ is the optimal constant in the inequality
$$
\int_{\mathbb{R}^3}j_{m}(\rho(x)){\rm d}x - \tau_c D(\rho,\rho) \ge 0, \quad \forall \, 0\le \rho\in L^1\cap L^{\frac{4}{3}}(\mathbb{R}^3).
$$
Here we have introduced the direct energy term 
$$
D(\rho,\rho) = \frac{1}{2}\iint_{\mathbb{R}^{3}\times\mathbb{R}^{3}}\frac{\rho(x)\rho(y)}{\left|x-y\right|}{\rm d}x{\rm d}y.
$$

From the operator inequality $|p| \leq \sqrt{|p|^2+m^2} \le |p|+m$ and a standard scaling argument, we can see that $\tau_{c}$ is independent of $m$. Since
$$
\lim_{m\to 0} j_m(\rho)= K_{\rm cl} \rho^{\frac{4}{3}}, \quad K_{\rm cl}:=\frac{3}{4}\left(\frac{6\pi^2}{q}\right)^{\frac{1}{3}}
$$
we find that 
$$\tau_{c}=\sigma_{f}^{-1}K_{\rm cl}$$ 
where $\sigma_{f}$ is the optimal constant in the inequality
\begin{equation}\label{ineq:HLS}
\sigma_{f}\|\rho\|_{L^{\frac{4}{3}}}^{\frac{4}{3}}\|\rho\|_{L^{1}}^{\frac{2}{3}} \geq D(\rho,\rho),\quad \forall 0\leq\rho\in L^{1}\cap L^{\frac{4}{3}}(\mathbb{R}^{3}).
\end{equation}
Numerically, $\sigma_{f}\approx 1.092$ (we use the notation in  \cite{LiYa-87} where $f$ stands for fermions). 

It is well-known (see \cite[Appendix A]{LiOx-81}) that \eqref{ineq:HLS} has an optimizer $Q$, which is unique up to dilations and translations. Moreover, such $Q$ can be chosen uniquely to be non-negative, symmetric decreasing, and satisfies
\begin{equation}\label{cond:LE}
\sigma_{f}\|Q\|_{L^{\frac{4}{3}}}^{\frac{4}{3}} = \|Q\|_{L^{1}}^{\frac{2}{3}} = D(Q,Q) = 1.
\end{equation}
This function $Q$ solves the Lane--Emden equation of order $3$ (see \cite{La-70,Th-27,GaBaHa-80})
\begin{equation}\label{eq:eq:LE}
\frac{4}{3}\sigma_{f} Q(x)^{\frac{1}{3}}-(|\cdot|^{-1}\star Q)(x)+\frac{2}{3} \quad \begin{cases}
=0 & \text{if } Q(x)>0,\\
\geq0 & \text{if } Q(x)=0.
\end{cases}
\end{equation}

In the present paper, we analyze the existence and blow-up behavior of the minimizers of the variational problem $E_{\tau}(1)$ in \eqref{eq:neutron star energy} when $\tau$ approaches $\tau_{c}$ from below. 

Our first result is
\begin{theorem}[Existence of minimizers]
	\label{thm:existence of minimizers}
	Fix $q\geq 1$ and $m>0$. Assume that $V$ satisfies 
\begin{itemize}
	\item[($V_1$)] $0 > V\in L^{4}(\mathbb{R}^3)+L^{\infty}(\mathbb{R}^3)$, and \item[($V_2$)] $V$ vanishes at infinity, i.e. $|\{x:|V(x)|>a\}|<\infty$ for all $a>0$.
\end{itemize}
Then the variational problem $E_{\tau}(1)$ in \eqref{eq:neutron star energy} has the following properties
	\begin{itemize}
		\item[(i)] If $\tau>\tau_{c}$, then $E_{\tau}(1)=-\infty$;
		\item[(ii)] If $\tau=\tau_{c}$, then $E_{\tau}(1)=\inf_{x\in\mathbb{R}^3}V(x)$ but it has no minimizer;
		\item[(iii)] If $0< \tau<\tau_{c}$, then $E_{\tau}(1)$ has at least one minimizer.
	\end{itemize}
\end{theorem}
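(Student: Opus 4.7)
For parts (i) and (ii), I would test with the dilation $\rho_\lambda(x):=\lambda^3 Q(\lambda(x-x_0))$ of the Lane--Emden optimizer $Q$ from \eqref{cond:LE}. The bound $j_m(\rho)\leq K_{\mathrm{cl}}\rho^{4/3}+m\rho$ (from $\sqrt{|p|^2+m^2}\leq|p|+m$), combined with the scaling $D(\rho_\lambda,\rho_\lambda)=\lambda D(Q,Q)$, the saturation $K_{\mathrm{cl}}\|Q\|_{L^{4/3}}^{4/3}=\tau_c D(Q,Q)$ encoded in \eqref{cond:LE}, and the estimate $|\int V\rho_\lambda|\leq\|V_1\|_{L^4}\|\rho_\lambda\|_{L^{4/3}}+\|V_2\|_{L^\infty}=O(\lambda^{3/4})$ from $V=V_1+V_2\in L^4+L^\infty$, yields
\[
\mathcal{E}_\tau(\rho_\lambda)\leq(\tau_c-\tau)\lambda D(Q,Q)+m+O(\lambda^{3/4}).
\]
For $\tau>\tau_c$ this tends to $-\infty$ as $\lambda\to\infty$, giving (i). For $\tau=\tau_c$ the linear-in-$\lambda$ term cancels; the sharper expansion $j_m(\rho)-K_{\mathrm{cl}}\rho^{4/3}=O(m^2\rho^{2/3})$ at large $\rho$ reduces the kinetic--gravitational remainder to $O(\lambda^{-1})$, while $\int V\rho_\lambda\to V(x_0)$ at Lebesgue points of $V$, so letting $x_0$ approach the essential infimum of $V$ gives $E_{\tau_c}(1)\leq\inf V$. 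The matching lower bound follows from the HLS chain $\int j_m(\rho)\geq K_{\mathrm{cl}}\|\rho\|_{L^{4/3}}^{4/3}\geq\tau_c D(\rho,\rho)$ at $\|\rho\|_{L^1}=1$. Non-existence at $\tau_c$ is because $j_m(\rho)>K_{\mathrm{cl}}\rho^{4/3}$ strictly whenever $m>0$ and $\rho>0$, so no nontrivial $\rho$ can saturate the chain.

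For (iii), I would use the direct method combined with Lions' concentration-compactness. Any minimizing sequence $(\rho_n)$ with $\|\rho_n\|_{L^1}=1$ satisfies
\[
\mathcal{E}_\tau(\rho_n)\geq(\tau_c-\tau)\sigma_f\|\rho_n\|_{L^{4/3}}^{4/3}-\|V_1\|_{L^4}\|\rho_n\|_{L^{4/3}}-\|V_2\|_{L^\infty}
\]
by \eqref{ineq:HLS}, and is therefore uniformly bounded in $L^1\cap L^{4/3}$. Along a subsequence $\rho_n\rightharpoonup\rho\geq 0$ in $L^{4/3}$, and the remaining task is to exclude the three pathologies from the concentration-compactness alternative: vanishing (spreading to zero), dichotomy (splitting into two bumps drifting apart), and translation escape (a single bump drifting to infinity where $V\approx 0$).

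The main obstacle is the strict binding inequality
\[
E_\tau(1)<E_\tau(\alpha)+E_\tau^\infty(1-\alpha),\quad\forall\,\alpha\in(0,1),
\]
where $E_\tau(\alpha)$ and $E_\tau^\infty(\alpha)$ denote the analogous infima at mass $\alpha$ (with $E_\tau^\infty$ the $V\equiv 0$ case); this excludes dichotomy. My plan is to glue compactly supported near-minimizers $\rho_\alpha$ of $E_\tau(\alpha)$ and $\tilde\rho_{1-\alpha}$ of $E_\tau^\infty(1-\alpha)$ at large separation $R\mathbf{e}_1$: since $j_m$ is additive on disjoint supports, the energy of $\rho_\alpha+\tilde\rho_{1-\alpha}(\cdot-R\mathbf{e}_1)$ equals $\mathcal{E}_\tau(\rho_\alpha)+\mathcal{E}_\tau^\infty(\tilde\rho_{1-\alpha})$ plus the translated $V$ cross-term (which is $\leq 0$ since $V\leq 0$) and the attractive Coulomb cross-term $-\tau\alpha(1-\alpha)/R+O(R^{-2})<0$, yielding the strict inequality for $R$ large. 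Vanishing is excluded by $E_\tau(1)\leq E_\tau^\infty(1)<m$ (the latter obtained from the trial $\rho_\epsilon=\epsilon^3\rho_0(\epsilon\cdot)$ with small $\epsilon>0$, via the non-relativistic expansion $j_m(\rho)=m\rho+O(\rho^{5/3})$ at small $\rho$), since under vanishing $D(\rho_n,\rho_n)\to 0$ and $\int V\rho_n\to 0$, forcing $\liminf\mathcal{E}_\tau(\rho_n)\geq m$. Translation escape is excluded by $E_\tau(1)<E_\tau^\infty(1)$, shown by translating a localized near-minimizer of $E_\tau^\infty(1)$ to a point where $V<0$ to gain strictly negative potential energy.

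With the three pathologies excluded, $(\rho_n)$ is tight and $\rho_n\to\rho$ strongly in $L^1$ with $\|\rho\|_{L^1}=1$. The conclusion then follows from standard lower semicontinuity: $\int j_m(\rho)\leq\liminf\int j_m(\rho_n)$ by convexity of $j_m$ and Fatou along an a.e.-convergent subsequence; $D(\rho_n,\rho_n)\to D(\rho,\rho)$ via $L^{6/5}$-convergence (interpolation between $L^1$-strong and $L^{4/3}$-bounded control) combined with HLS; and $\int V\rho_n\to\int V\rho$ by $L^4$--$L^{4/3}$ weak duality for $V_1$ and $L^\infty$--$L^1$ strong duality for $V_2$. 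Hence $\mathcal{E}_\tau(\rho)\leq E_\tau(1)$, so $\rho$ is a minimizer.
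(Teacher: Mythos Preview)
Your overall strategy matches the paper's --- trial states for (i)--(ii), concentration-compactness for (iii) --- and your treatment of vanishing and translation escape is sound (indeed your vanishing argument, via $D(\rho_n,\rho_n)\to 0$ under true Lions vanishing and the bound $\int j_m(\rho)\geq m\|\rho\|_{L^1}$, is a legitimate alternative to the paper's, which simply compares to $E_\tau^\infty(1)$ using its known Lieb--Yau minimizer).

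The real gap is in your strict binding argument. Gluing compactly supported \emph{near}-minimizers $\rho_\alpha,\tilde\rho_{1-\alpha}$ at separation $R$ yields
\[
E_\tau(1)\;\leq\;\mathcal{E}_\tau(\rho_\alpha)+\mathcal{E}_\tau^\infty(\tilde\rho_{1-\alpha})-\frac{c}{R}\;\leq\;E_\tau(\alpha)+E_\tau^\infty(1-\alpha)+2\epsilon-\frac{c}{R},
\]
but $R$ must exceed the support radii of the near-minimizers, and you have no a priori control on how these radii grow as $\epsilon\to 0$. The Coulomb gain $c/R$ may thus be swamped by $2\epsilon$, and in the limit you recover only the \emph{non-strict} inequality $E_\tau(1)\leq E_\tau(\alpha)+E_\tau^\infty(1-\alpha)$. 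The paper resolves this by arguing \emph{inside} the dichotomy case: if dichotomy occurs at level $\alpha$, the inner piece $\rho_k^{(1)}$ is a minimizing sequence for $E_\tau(\alpha)$ whose weak limit $\rho_0$ is shown to be an actual minimizer, and the Euler--Lagrange equation forces $\rho_0$ to have compact support. Together with the compactly supported Lieb--Yau minimizer of $E_\tau^\infty(1-\alpha)$, one now has two genuine minimizers with fixed supports, so the gluing gives a bona fide strict inequality and hence a contradiction. You should either adopt this route or supply an independent reason why $\epsilon R_0(\epsilon)\to 0$ for some choice of near-minimizers.
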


Here we focus on the case where $V$ is attractive and vanishes at infinity. We assume $V\in L^{4}(\mathbb{R}^3)+L^{\infty}(\mathbb{R}^3)$ in order to ensure that the term $\int_{\mathbb R^3} V(x) \rho(x){\rm d}x$ is meaningful when $\rho\in L^{1}\cap L^{\frac{4}{3}}(\mathbb{R}^{3})$. The value $\inf_{x\in\mathbb{R}^3}V(x)$ in (ii) should be interpreted properly as the essential infimum when $V$ is a general, measurable function. 

In the case $V \equiv  0$, Theorem \ref{thm:existence of minimizers} is well-known (see \cite{AuBe-71,LiYa-87}). Moreover, when $0< \tau<\tau_{c}$, the minimizer is unique up to translations and can be chosen to be radially symmetric decreasing by the rearrangement inequalities (see \cite[Theorem 5]{LiYa-87}). For $V \not \equiv 0$, the existence result in Theorem \ref{thm:existence of minimizers} is non-trivial and requires a concentration-compactness argument \cite{Li-84} in order to deal with the lack of compactness of minimizing sequences.

Our next result concerns the behavior of the minimizers of $E_{\tau}(1)$ as $\tau\uparrow \tau_c$. We will show that the minimizers of $E_{\tau}(1)$ blows up  and that its blow-up profile is given by the unique optimizer of \eqref{ineq:HLS}. To make the analysis precise, let us assume that the external potential $V$ is either $0$ or of the typical form (relevant for physics)
\begin{equation}\label{external potential}
V(x)=-\sum_{i=1}^{M}\frac{z_{i}}{|x-x_i|^{s_{i}}},
\end{equation} 
where $0<z_{i}$, $0<s_{i}<\frac{3}{4}$, $x_i\in\mathbb{R}^3$ and $x_i\ne x_j$, for $1\leq i\ne j\leq M$. Let 
$$
s=\max\{s_{i}:1\leq i\leq M\}, \quad z=\max \{z_{i}:s_{i}=s\}, \quad \mathcal{Z}=\{x_i:s_{i}=s \text{ and } z_{i}=z\}.
$$
Thus $\mathcal{Z}$ denotes the locations of the most singular points of $V(x)$. We have

\begin{theorem}[Blow-up of minimizers]
	\label{thm:behavior} Fix $q\geq 1$ and $m>0$. For $0< \tau<\tau_{c}$, let $\rho_{\tau}$ be a minimizer of $E_{\tau}(1)$. Then for every sequence $\{\tau_{n}\}$ with $\tau_{n}\uparrow \tau_{c}$ as $n\to\infty$, the following hold true. 
	\begin{itemize}
		\item[(i)] If $V\equiv 0$, then there exist a subsequence of $\{\tau_{n}\}$ (still denoted by $\{\tau_{n}\}$) and a sequence $\{x_n\}\subset \mathbb{R}^3$ such that  
		\begin{equation}\label{conv:minimizer-infinity}
		\lim_{n\to\infty}(\tau_{c}-\tau_{n})^{\frac{3}{2}}\rho_{\tau_{n}}((\tau_{c}-\tau_{n})^{\frac{1}{2}}x+x_n)=\lambda_{\infty}^{3}Q\left(\lambda_{\infty} x\right)
		\end{equation}
		strongly in $L^{1}\cap L^{\frac{4}{3}}(\mathbb{R}^{3})$. 		Here $Q$ is the unique non-negative radial function satisfying \eqref{cond:LE}--\eqref{eq:eq:LE} and
		$$
		\lambda_{\infty} = \frac{3}{4}m\left(\frac{1}{K_{\rm cl}}\int_{\mathbb{R}^3}Q(x)^{\frac{2}{3}}{\rm d}x\right)^{\frac{1}{2}}.
		$$
		\item[(ii)] If $V$ is defined as in \eqref{external potential}, then there exist a subsequence of $\{\tau_{n}\}$ (still denoted by $\{\tau_{n}\}$) and an $x_j\in\mathcal{Z}$ such that
		\begin{equation}\label{conv:minimizer-V}
		\lim_{n\to\infty}(\tau_{c}-\tau_{n})^{\frac{3}{1-s}}\rho_{\tau_{n}}((\tau_{c}-\tau_{n})^{\frac{1}{1-s}}x+x_j)=\lambda_s^{3}Q\left(\lambda_s x\right)
		\end{equation}
		strongly in $L^{1}\cap L^{\frac{4}{3}}(\mathbb{R}^{3})$, with 
		$$
	\lambda_{s} = \left(sz\int_{\mathbb{R}^3}\frac{Q(x)}{|x|^{s}}{\rm d}x\right)^{\frac{1}{1-s}}.
	$$ 
		If $\mathcal{Z}$ has a unique element, then \eqref{conv:minimizer-V} holds for the whole sequence of $\tau_n$. 
	\end{itemize}
\end{theorem}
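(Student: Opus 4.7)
My plan is to establish matching sharp asymptotics for $E_\tau(1)$ as $\tau\uparrow\tau_c$---from above via a Lane--Emden trial state, from below via the sharp HLS inequality \eqref{ineq:HLS} combined with the pointwise bound $j_m(\rho)\geq K_{\rm cl}\rho^{4/3}$---and then to extract the rescaled limit of $\rho_{\tau_n}$ through a concentration--compactness argument, identifying it with $Q$ via the stated uniqueness.

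The starting point is the asymptotic expansion $j_m(\rho)=K_{\rm cl}\rho^{4/3}+C_m\rho^{2/3}+o(\rho^{2/3})$ as $\rho\to\infty$, with an explicit $C_m>0$ read off from the closed form of $j_m$. For $V\equiv 0$, I test $\mathcal{E}^{\infty}_{\tau}$ on $\rho_\ell(x)=\ell^3 Q(\ell x)$; using \eqref{cond:LE} and the scalings of $j_m$ and $D$,
$$
\mathcal{E}^{\infty}_{\tau}(\rho_\ell) = (\tau_c-\tau)\ell + C_m\ell^{-1}\int_{\mathbb{R}^3} Q(x)^{2/3}\,dx + o(\ell^{-1}),
$$
so optimizing in $\ell$ gives the scale $\ell^\star\sim(\tau_c-\tau)^{-1/2}$ and $E_\tau^{\infty}(1)=O((\tau_c-\tau)^{1/2})$. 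For $V$ as in \eqref{external potential}, testing on $\rho_\ell(x)=\ell^3 Q(\ell(x-x_j))$ with $x_j\in\mathcal{Z}$ and invoking dominated convergence yields $\int V\rho_\ell\,dx = -z\ell^s\int Q(x)/|x|^s\,dx + o(\ell^s)$; balancing $(\tau_c-\tau)\ell$ against $z\ell^s\int Q/|x|^s$ produces the scale $\ell^\star\sim(\tau_c-\tau)^{-1/(1-s)}$ and a sharp upper bound on $E_\tau(1)$ of matching order.

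With $\ell_n$ chosen as these scales and an as-yet-unspecified translation $x_n$, I define $\tilde\rho_n(x)=\ell_n^{-3}\rho_{\tau_n}(\ell_n^{-1}x+x_n)$, which is $L^1$-normalized. Using $K_{\rm cl}=\sigma_f\tau_c$, the inequality
$$
\mathcal{E}_{\tau_n}(\rho_{\tau_n})\geq \sigma_f(\tau_c-\tau_n)\|\rho_{\tau_n}\|_{L^{4/3}}^{4/3} + \tau_n\bigl(\sigma_f\|\rho_{\tau_n}\|_{L^{4/3}}^{4/3}-D(\rho_{\tau_n},\rho_{\tau_n})\bigr) + \int V\rho_{\tau_n}\,dx,
$$
combined with the upper bound, simultaneously controls $(\tau_c-\tau_n)\|\rho_{\tau_n}\|_{L^{4/3}}^{4/3}=O(\ell_n^{-1})$ and forces the HLS deficit $\sigma_f\|\tilde\rho_n\|_{L^{4/3}}^{4/3}-D(\tilde\rho_n,\tilde\rho_n)\to 0$. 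I then apply Lions' concentration--compactness to $\tilde\rho_n$ in $L^1\cap L^{4/3}$: vanishing is ruled out by this asymptotic HLS saturation and dichotomy by the strict subadditivity inherited from the scaling of the leading-order functional. An appropriate choice of $x_n$ (with $x_n=x_j$ eventually fixed in case (ii) by the argument below) then yields a subsequential weak limit $\rho^\star$ with $\|\rho^\star\|_{L^1}=1$.

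By weak lower semicontinuity and the vanishing HLS deficit, $\rho^\star$ is itself an optimizer of \eqref{ineq:HLS}; by the stated uniqueness, $\rho^\star=\lambda^3 Q(\lambda\cdot)$ for some $\lambda>0$, and matching the subleading constants in the upper bound fixes $\lambda=\lambda_\infty$ in case (i) and $\lambda=\lambda_s$ in case (ii). In case (ii), the concentration point $x_j$ must lie in $\mathcal{Z}$: otherwise a trial state concentrated at some point of $\mathcal{Z}$ produces, for large $n$, a strictly lower energy than $E_{\tau_n}(1)$, contradicting minimality. Strong convergence in $L^1$ then follows from the Brezis--Lieb lemma since $\|\tilde\rho_n\|_{L^1}\equiv 1$, and strong convergence in $L^{4/3}$ from combining weak convergence with the convergence of $L^{4/3}$-norms forced by the HLS saturation. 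I expect the hardest step to be running concentration--compactness simultaneously in $L^1$ and $L^{4/3}$ without a priori tightness, together with pinpointing, in case (ii), the concentration point as an element of $\mathcal{Z}$ via a careful comparison of the subleading external-potential asymptotics.
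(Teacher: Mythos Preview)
Your overall strategy---avoid the Euler--Lagrange equation entirely, show the rescaled HLS deficit vanishes, and then invoke uniqueness of HLS optimizers---is genuinely different from the paper's route. The paper instead passes the Euler--Lagrange equation \eqref{eq:E-L-1}--\eqref{eq:E-L-2} to the weak limit, identifies the limit directly as a solution of the Lane--Emden equation, and only afterward shows that this solution optimizes \eqref{ineq:HLS} via an energy comparison. Both approaches end by computing the sharp energy asymptotics to pin down $\lambda$, but the paper gets the profile from the equation first, whereas you attempt to get it from HLS stability.

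There is, however, a real gap in case~(i). The pointwise bound $j_m(\rho)\geq K_{\rm cl}\rho^{4/3}$ gives only $E_\tau^\infty(1)\geq 0$, not the sharp lower bound of order $(\tau_c-\tau)^{1/2}$ you claim. Without that sharp lower bound you cannot bound $D(\tilde\rho_n,\tilde\rho_n)$ away from zero, and then you cannot rule out vanishing: the HLS deficit tending to zero is trivially consistent with both $\|\tilde\rho_n\|_{L^{4/3}}^{4/3}$ and $D(\tilde\rho_n,\tilde\rho_n)$ tending to zero while $\|\tilde\rho_n\|_{L^1}\equiv 1$. Nor can you determine $\lambda_\infty$, which depends on $m$ through the subleading kinetic term and is invisible to the massless bound. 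The paper remedies this with the refined inequality $j_m(\rho)\geq K_{\rm cl}\rho^{4/3}+\tfrac{m^2}{2}\tilde j_m(\rho)$ (see \eqref{ineq:fake-kinetic}--\eqref{fake-kinetic-1}), which yields the sharp energy lower bound and, via the comparison trick \eqref{ineq:direct term}, the lower bound on $D$. A second, smaller gap: your step ``weak lower semicontinuity $+$ vanishing deficit $\Rightarrow$ $\rho^\star$ optimizes \eqref{ineq:HLS}'' is not immediate, since weak lsc of both $\|\cdot\|_{L^{4/3}}^{4/3}$ and $D$ points the wrong way; you first need $D(\tilde\rho_n,\tilde\rho_n)\to D(\rho^\star,\rho^\star)$, which is precisely where tightness from concentration--compactness enters. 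Finally, ``strict subadditivity of the leading-order functional'' is not available as stated: the massless functional $K_{\rm cl}\|\rho\|_{L^{4/3}}^{4/3}-\tau D(\rho,\rho)$ has infimum zero at every mass when $\tau<\tau_c$, so ruling out dichotomy needs a more careful argument at the level of the full rescaled problem.
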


Note that when $V\equiv 0$ or $V$ is defined by \eqref{external potential}, the existence of minimizer when $0< \tau<\tau_{c}$ has been proved in Theorem \ref{thm:existence of minimizers}. Our proof of Theorem \ref{thm:behavior} is based on a detailed analysis of the Euler--Lagrange equation associated to the minimizers of $E_{\tau}(1)$ when $\tau$ tends to $\tau_{c}$. As a by-product of our proof, we also obtain the asymptotic behavior of the energy, that is 
\begin{equation} \label{lim:e(tau)/epsilon^0}
\lim_{\tau\uparrow \tau_{c}} \frac{E_{\tau}^{\infty}(1)}{(\tau_c-\tau)^{\frac{1}{2}}} = \frac{3}{2}m\left(\frac{1}{K_{\rm cl}}\int_{\mathbb{R}^3}Q(x)^{\frac{2}{3}}{\rm d}x\right)^{\frac{1}{2}}
\end{equation}
if $V\equiv 0$, and 
\begin{equation} \label{lim:e(tau)/epsilon^s}
\lim_{\tau\uparrow \tau_{c}} \frac{E_{\tau}(1)}{(\tau_c-\tau)^{\frac{s}{s-1}}} = \left(1-\frac{1}{s}\right) \left(sz\int_{\mathbb{R}^3}\frac{Q(x)}{|x|^{s}}{\rm d}x\right)^{\frac{1}{1-s}}
\end{equation}
if $V$ is defined as in \eqref{external potential}.

In the case of boson stars, the ground state energy can be approximately captured  by the pseudo-relativistic Hartree-type functional \cite{LiYa-87}. In this case, the blow-up analysis has been carried out recently in  \cite{GuZe-17,YaYa-17,Ng-17-boson} (see also \cite{Ng-19-boson}). The method in these works is inspired by Guo and Seiringer \cite{GuSe-13} who studied the mass concentration of the Bose--Einstein condensate described by the 2D focusing Gross--Pitaevskii equation (see also \cite{LeNaRo-17} for an extension to the rotating case). 

The Chandrasekhar model studied in the present paper is a semi-classical theory and the Lane--Emden equation \eqref{eq:eq:LE} is different from the Hartree-type equations in \cite{GuSe-13,LeNaRo-17,GuZe-17,YaYa-17,Ng-17-boson,Ng-19-boson}. This requires new ideas in order to prove both existence and blow-up results. We hope that our study can serve as a first step to understand the blow-up phenomenon of neutron stars in a rigorous mathematical approach. 

\medskip

We will prove Theorem \ref{thm:existence of minimizers} in Section \ref{sec:existence-minimizer}, and Theorem \ref{thm:behavior} in Section \ref{sec:behavior}. 

\section{Existence of minimizers} \label{sec:existence-minimizer}

In this section, we prove the existence and non-existence of minimizers of $E_{\tau}(1)$ as stated in Theorem \ref{thm:existence of minimizers}. The exsistence and non-exsitence of minimizer of $E_{\tau}^{\infty}(1)$ when $V\equiv 0$ is well-known result (see \cite{AuBe-71,LiYa-87}). Here we consider the case $V \not \equiv 0$ which satisfies conditions $(V_1)$--$(V_2)$.

Let $0< \tau < \tau_c$, and let $\{\rho_{n}\}$ be a minimizing sequence of $E_{\tau}(1)$, i.e.
$$
\lim_{n\to \infty}\mathcal{E}_{\tau}(\rho_{n})=E_{\tau}(1), \text{ with } \rho_{n} \in L^{1}\cap L^{\frac{4}{3}}(\mathbb{R}^{3}) \text{ and }\int_{\mathbb{R}^3}\rho_{n}(x){\rm d}x=1, \text{ for all } n.
$$
First of all, we see that all the terms of the energy functional $\mathcal{E}_{\tau}$ in \eqref{eq:TF-functional} are well-defined. Indeed, it follows from the inequality $\sqrt{|p|^{2}+m^{2}}\leq|p|+m$ that $j_{m}(\rho_{n})\leq K_{\rm cl}\rho_{n}^{\frac{4}{3}}+m\rho_{n}$, which shows that the kinetic energy is well-defined. On the other hand, since $\rho_{n}\in L^{1}\cap L^{\frac{4}{3}}(\mathbb{R}^{3})$
we have $\rho_{n}\in L^{r}(\mathbb{R}^{3})$ for any $1\leq r\leq \frac{4}{3}$ by interpolation, then it follows from the Hardy--Littlewood--Sobolev inequality (see \cite[Theorem 4.3]{LiLo}) that the direct term $D\left(\rho_{n},\rho_{n}\right)$ is well-defined. Finally, the conditions $(V_1)$--$(V_2)$ imply that the external potential term $\int_{\mathbb{R}^3}V(x)\rho_{n}(x){\rm d}x$ is well-defined. 

Next, we collect some basic facts.


\begin{lemma}[Binding inequality]\label{lem:ineq:binding}
	Fix $q\geq 1$ and $m>0$. Assume that $V \not \equiv 0$ satisfies conditions $(V_1)$--$(V_2)$. Then for any $0<\alpha<1$ we have
	\begin{equation}\label{ineq:binding-0}
	E_{\tau}(1)\leq E_{\tau}(\alpha)+E_{\tau}^{\infty}(1-\alpha).
	\end{equation}
\end{lemma}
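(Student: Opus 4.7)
The plan is to prove the binding inequality by a classical ``translate one piece to infinity'' trial argument. Fix $\varepsilon>0$ and choose near-minimizers $\rho_{1},\rho_{2}\in L^{1}\cap L^{4/3}(\mathbb{R}^{3})$ with $\int\rho_{1}=\alpha$, $\int\rho_{2}=1-\alpha$, $\mathcal{E}_{\tau}(\rho_{1})\leq E_{\tau}(\alpha)+\varepsilon$ and $\mathcal{E}_{\tau}^{\infty}(\rho_{2})\leq E_{\tau}^{\infty}(1-\alpha)+\varepsilon$. Since every term of $\mathcal{E}_{\tau}$ is continuous on $L^{1}\cap L^{4/3}(\mathbb{R}^{3})$ (the kinetic term via the pointwise bound $j_{m}(\rho)\leq K_{\mathrm{cl}}\rho^{4/3}+m\rho$ and generalized dominated convergence, the direct term via Hardy--Littlewood--Sobolev, and the external term via $(V_{1})$ and Hölder), truncating each $\rho_{i}$ to a large ball and renormalizing its $L^{1}$-mass back to the correct value lets me assume in addition that $\rho_{1}$ and $\rho_{2}$ are compactly supported, at the cost of a further $\varepsilon$ in each energy estimate.

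Now, for $y\in\mathbb{R}^{3}$ with $|y|$ so large that $\operatorname{supp}\rho_{1}$ and $y+\operatorname{supp}\rho_{2}$ are disjoint, I set $\rho_{y}(x):=\rho_{1}(x)+\rho_{2}(x-y)$. Then $\rho_{y}\geq 0$, $\rho_{y}\in L^{1}\cap L^{4/3}$, and $\int\rho_{y}=1$, so $\rho_{y}$ is admissible in \eqref{eq:neutron star energy}. By disjointness of supports the kinetic term splits exactly, $\int j_{m}(\rho_{y})=\int j_{m}(\rho_{1})+\int j_{m}(\rho_{2})$. Expanding the direct term yields $D(\rho_{y},\rho_{y})=D(\rho_{1},\rho_{1})+D(\rho_{2},\rho_{2})+\iint\rho_{1}(x)\rho_{2}(z)|x-y-z|^{-1}\,dx\,dz$, and if both supports sit in $B_{R}(0)$ the cross term is bounded by $\alpha(1-\alpha)/(|y|-2R)=O(1/|y|)$. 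For the external term, $\int V\rho_{y}=\int V\rho_{1}+\int V(z+y)\rho_{2}(z)\,dz$; using $(V_{2})$ I decompose $V=V_{(a)}+V_{(b)}$ with $V_{(a)}\in L^{4}$ supported on a set of finite measure and $\|V_{(b)}\|_{\infty}\leq\eta$, and then $|\int V(\cdot+y)\rho_{2}|\leq \|V_{(a)}\mathbf{1}_{\operatorname{supp}\rho_{2}-y}\|_{4}\|\rho_{2}\|_{4/3}+\eta(1-\alpha)=o_{|y|\to\infty}(1)+O(\eta)$.

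Combining the three estimates, $\mathcal{E}_{\tau}(\rho_{y})\leq\mathcal{E}_{\tau}(\rho_{1})+\mathcal{E}_{\tau}^{\infty}(\rho_{2})+o_{|y|\to\infty}(1)+O(\eta)$. Taking the infimum in \eqref{eq:neutron star energy} and then sending $|y|\to\infty$, $\eta\downarrow 0$, and finally $\varepsilon\downarrow 0$ delivers \eqref{ineq:binding-0}. The only technical step worth flagging is the reduction to compactly supported densities, which underwrites the \emph{exact} additivity of the kinetic functional; the rest is the standard vanishing-cross-terms computation, in which assumption $(V_{2})$ plays exactly the role of sending the shifted-potential contribution to zero.
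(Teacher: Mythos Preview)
Your proof is correct and follows the same overall strategy as the paper: choose near-minimizers, reduce to compact support, form the trial density $\rho_1+\rho_2(\cdot-y)$, and compare. The one notable difference is in how the cross terms are discharged. The paper uses a \emph{fixed} translation (by $3R$, just enough to make supports disjoint) and then simply observes that the Coulomb cross term $2D(\rho_1,\rho_2(\cdot-y))$ is nonnegative (so $-\tau D$ only decreases) and that $\int V\,\rho_2(\cdot-y)\leq 0$ by the sign hypothesis $V<0$ in $(V_1)$; no limit $|y|\to\infty$ is taken and $(V_2)$ is never invoked. Your argument instead sends $|y|\to\infty$ and shows both cross terms vanish, using $(V_2)$ for the external-potential piece. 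Your route is slightly longer but has the merit of working under the weaker hypothesis that $V$ merely vanishes at infinity (without a sign), whereas the paper's shortcut genuinely needs $V\leq 0$. Either way the kinetic splitting is exact once supports are disjoint, and the conclusion \eqref{ineq:binding-0} follows.
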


\begin{proof}
	Assume, by contradiction, that there exists $\delta>0$ and $0<\alpha<1$ such that
	\begin{equation}\label{ineq:binding-1}
	E_{\tau}(1)>E_{\tau}(\alpha)+E_{\tau}^{\infty}(1-\alpha)+\delta.
	\end{equation}
Then there exist states $\rho_{0}$ and $\rho_{\infty}$ in $L^{1}\cap L^{\frac{4}{3}}(\mathbb R^3)$ with $\int_{\mathbb{R}^{3}}\rho_{0}(x){\rm d}x = \alpha = 1 - \int_{\mathbb{R}^{3}}\rho_{\infty}(x){\rm d}x$ such that $E_{\tau}(\alpha)>\mathcal{E}_{\tau}\left(\rho_{0}\right)-\delta/3$ and $E_{\tau}^{\infty}(1-\alpha)>\mathcal{E}^{\infty}_{\tau}\left(\rho_{\infty}\right)-\delta/3$. By a density argument, we can assume that $\rho_{0}$ and $\rho_{\infty}$ are compactly supported. Denote by $R>0$ the radius of a ball in $\mathbb{R}^{3}$ which contains the supports of $\rho_{0}$ and $\rho_{\infty}$. We define a translated operator by 
	$$
	\widetilde{\rho}_{\infty}(x):=\rho_{\infty}(x-3R)
	$$
	and a trial density operator $\rho_{\alpha}:=\rho_{0}+\widetilde{\rho}_{\infty}$.	By construction we have $\rho_{0}\tilde{\rho}_{\infty}=0$, $\int_{\mathbb{R}^{3}}\rho_{\alpha}(x){\rm d}x=1$ and
	\begin{equation}\label{ineq:binding-2}
	D(\rho_{\alpha},\rho_{\alpha}) \geq D(\rho_{0},\rho_{0}) + D(\tilde{\rho}_{\infty},\tilde{\rho}_{\infty}).
	\end{equation}
In addition, by the superadditivity of the function $\rho\mapsto j_{m}(\rho)$ and $\rho_{0}\tilde{\rho}_{\infty}=0$, we have
	\begin{equation}\label{eq:binding-3}
	j_{m}(\rho_{\alpha}) = j_{m}(\rho_{0})+j_{m}(\tilde{\rho}_{\infty}).
	\end{equation}
	Combining \eqref{ineq:binding-1}, \eqref{ineq:binding-2} and \eqref{eq:binding-3} together with the negativity of the external potential $V$ and the translation invariance of $\mathcal{E}^{\infty}_{\tau}$ we conclude that
	$$
	\frac{\delta}{3}+\mathcal{E}_{\tau}\left(\rho_{0}\right)+\mathcal{E}^{\infty}_{\tau}(\rho_{\infty})<E_{\tau}(1)\leq\mathcal{E}_{\tau}(\rho_{\alpha})\leq\mathcal{E}_{\tau}(\rho_{0})+\mathcal{E}^{\infty}_{\tau}({\rho}_{\infty}).
	$$		
	This is a contradiction and this implies that we must have \eqref{ineq:binding-0}. 
\end{proof}
\begin{lemma}[Coercivity of $\mathcal{E}_{\tau}$] \label{coercivity of E}
	Fix $q\geq 1$ and $m>0$. Assume that $V \not \equiv 0$ satisfies conditions $(V_1)$--$(V_2)$. Then for $0<\tau<\tau_{c}$, the energy functional $\mathcal{E}_{\tau}$ is coercive on $\left\{0\leq\rho\in L^{1}\cap L^{\frac{4}{3}}(\mathbb{R}^{3}),\int_{\mathbb{R}^{3}}\rho(x){\rm d}x=1\right\}$, i.e. we have
	$$
	\mathcal{E}_{\tau}(\rho)\to\infty \quad \text{as} \quad \int_{\mathbb{R}^{3}}\rho(x)^{\frac{4}{3}}{\rm d}x\to\infty.
	$$	
	In particular, all minimizing sequences of $\mathcal{E}_{\tau}$
	on $L^{1}\cap L^{\frac{4}{3}}(\mathbb{R}^{3})$ are bounded. 
\end{lemma}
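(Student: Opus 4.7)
The plan is to bound $\mathcal{E}_{\tau}(\rho)$ from below by a function of $\|\rho\|_{L^{4/3}}$ that blows up as $\|\rho\|_{L^{4/3}}\to\infty$, exploiting the strict inequality $\tau<\tau_{c}$. Three elementary estimates — one per term of the functional — suffice.

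For the kinetic term, the pointwise bound $\sqrt{|p|^{2}+m^{2}}\ge|p|$ gives $j_{m}(\rho)\ge K_{\rm cl}\,\rho^{4/3}$, whence $\int j_{m}(\rho)\,{\rm d}x\ge K_{\rm cl}\|\rho\|_{L^{4/3}}^{4/3}$. For the attractive direct term, I would apply the sharp fermionic HLS-type inequality \eqref{ineq:HLS} together with the mass constraint $\|\rho\|_{L^{1}}=1$ to obtain $D(\rho,\rho)\le\sigma_{f}\|\rho\|_{L^{4/3}}^{4/3}$. Finally, decomposing $V=V_{1}+V_{2}$ with $V_{1}\in L^{4}(\mathbb{R}^{3})$ and $V_{2}\in L^{\infty}(\mathbb{R}^{3})$ (which is allowed by $(V_{1})$), Hölder's inequality yields
$$
\int_{\mathbb{R}^{3}} V(x)\rho(x)\,{\rm d}x \;\ge\; -\|V_{1}\|_{L^{4}}\,\|\rho\|_{L^{4/3}} - \|V_{2}\|_{L^{\infty}}.
$$

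Summing the three bounds produces
$$
\mathcal{E}_{\tau}(\rho) \;\ge\; (K_{\rm cl}-\tau\sigma_{f})\,\|\rho\|_{L^{4/3}}^{4/3} \;-\; \|V_{1}\|_{L^{4}}\,\|\rho\|_{L^{4/3}} \;-\; \|V_{2}\|_{L^{\infty}}.
$$
The hypothesis $\tau<\tau_{c}=\sigma_{f}^{-1}K_{\rm cl}$ makes the leading coefficient $K_{\rm cl}-\tau\sigma_{f}$ strictly positive, and since $4/3>1$ this lower bound tends to $+\infty$ as $\|\rho\|_{L^{4/3}}\to\infty$, giving the coercivity. For the "in particular" clause, a minimizing sequence $\{\rho_{n}\}$ satisfies $\mathcal{E}_{\tau}(\rho_{n})\to E_{\tau}(1)$, which is finite (a Gaussian trial state shows $E_{\tau}(1)<\infty$, while the displayed inequality itself shows $E_{\tau}(1)>-\infty$); hence the displayed bound forces $\|\rho_{n}\|_{L^{4/3}}$ to remain bounded, and $\|\rho_{n}\|_{L^{1}}=1$ is built into the constraint.

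I do not expect a genuine obstacle here: this is the standard Thomas--Fermi-style coercivity argument, adapted by the semi-classical inequality $j_{m}\ge K_{\rm cl}\rho^{4/3}$. The one point worth stating carefully is that the constant $\sigma_{f}$ controlling $D(\rho,\rho)$ must match the one implicit in the definition of $\tau_{c}$, so that the margin $K_{\rm cl}-\tau\sigma_{f}$ is manifestly positive; this matching is recorded in the identity $\tau_{c}=\sigma_{f}^{-1}K_{\rm cl}$ from the introduction.
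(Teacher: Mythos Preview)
Your argument is correct. The three term-by-term estimates are each valid: the kinetic lower bound $j_m(\rho)\ge K_{\rm cl}\rho^{4/3}$ follows from $\sqrt{|p|^2+m^2}\ge|p|$, the direct-term bound is precisely \eqref{ineq:HLS} with the mass constraint, and the potential bound is H\"older applied to the decomposition $(V_1)$. The resulting inequality
\[
\mathcal{E}_\tau(\rho)\ \ge\ (K_{\rm cl}-\tau\sigma_f)\,\|\rho\|_{L^{4/3}}^{4/3}-\|V_1\|_{L^4}\,\|\rho\|_{L^{4/3}}-\|V_2\|_{L^\infty}
\]
indeed diverges since $K_{\rm cl}-\tau\sigma_f>0$ by $\tau<\tau_c=\sigma_f^{-1}K_{\rm cl}$, so coercivity and the boundedness of minimizing sequences follow as you say.

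The paper takes a different route: rather than estimating each term, it splits off a small fraction of the functional,
\[
\mathcal{E}_\tau(\rho)\ \ge\ \epsilon\!\int\rho^{4/3}\,{\rm d}x+(1-\epsilon)\,\mathcal{E}_{\tau/(1-\epsilon)}(\rho)\ \ge\ \epsilon\!\int\rho^{4/3}\,{\rm d}x+(1-\epsilon)\,E_{\tau/(1-\epsilon)}(1),
\]
and then invokes $E_{\tau/(1-\epsilon)}(1)>-\infty$ for $\epsilon$ small enough that $\tau/(1-\epsilon)<\tau_c$. This is slicker to write but leans on the finiteness of the infimum at a nearby subcritical coupling, a fact not yet established in the paper at that point (and which, if unpacked, amounts to the same H\"older/\eqref{ineq:HLS} estimates you wrote down). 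Your approach is more self-contained and yields an explicit quantitative lower bound; the paper's splitting trick is more modular and would generalize verbatim to other kinetic densities once the subcritical boundedness is known.
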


\begin{proof}
	For any $\rho\in L^{1}\cap L^{\frac{4}{3}}(\mathbb{R}^{3})$ with $\int_{\mathbb{R}^{3}}\rho(x){\rm d}x=1$
	and $0<\epsilon<1$ we have 
	$$
	\mathcal{E}_{\tau}(\rho) = \epsilon\int_{\mathbb{R}^{3}}\rho(x)^{\frac{4}{3}}{\rm d}x + (1-\epsilon)\mathcal{E}_{\frac{\tau}{1-\epsilon}}(\rho) \geq \epsilon\int_{\mathbb{R}^{3}}\rho(x)^{\frac{4}{3}}{\rm d}x+(1-\epsilon)E_{\frac{\tau}{1-\epsilon}}(1).
	$$
	Since $0<\tau<\tau_{c}$ we can choose $\epsilon$ small enough such that
	$\frac{\tau}{1-\epsilon}<\tau_{c}$, and hence $E_{\frac{\tau}{1-\epsilon}}(1)>-\infty$.
	This implies that $\mathcal{E}_{\tau}(\rho)\to\infty$ as $\int_{\mathbb{R}^{3}}\rho(x)^{\frac{4}{3}}{\rm d}x\to\infty$. 
\end{proof}

By Lemma \ref{coercivity of E}, the minimizing sequence $\{\rho_{n}\}$ is bounded in $L^{\frac{4}{3}}(\mathbb{R}^{3})$, and hence there exists a subsequence $\{\rho_{n_k}\}$ such that $\rho_{n_k}\rightharpoonup \rho_{0}$ weakly in $L^{\frac{4}{3}}(\mathbb{R}^{3})$. We now apply the following adaptation of the concentration-compactness lemma.

\begin{lemma}\label{lem:concentration}
	Let $\{\rho_{n}\}_{n\geq 1}$ be a bounded sequence in $L^{\frac{4}{3}}(\mathbb{R}^{3})$ satisfying $\rho_{n} \geq 0$ and $\int_{\mathbb{R}^3}\rho_{n}(x){\rm d}x=1$. Then there exists a subsequence $\{\rho_{n_{k}}\}_{k\geq 1}$ satisfying one of the three following possibilities
	\begin{itemize}
		\item[(i)] (compactness) $\rho_{n_{k}}$ is tight, i.e. for all $\epsilon>0$, there exists $R<\infty$ such that
		$$
		\int_{|x|\leq R}\rho_{n_{k}}(x){\rm d}x\geq 1-\epsilon.
		$$
		\item[(ii)] (vanishing) ${\lim_{k\to\infty}\int_{|x|\leq R}\rho_{n_{k}}(x){\rm d}x=0}$ for all $R<\infty$. 
		\item[(iii)] (dichotomy) there exist $\alpha\in (0,1)$ and a sequence $\{R_{k}\}_{k\in\mathbb{N}}\subset\mathbb{R}_{+}$ with $R_{k}\to\infty$ such that
\begin{equation}\label{split}
		\lim_{k\to\infty}\int_{|x|\leq R_{k}}\rho_{n_{k}}(x){\rm d}x = \alpha, \quad \lim_{k\to\infty}\int_{R_{k}\leq |x| \leq 6R_{k}}\rho_{n_{k}}(x){\rm d}x=0.
\end{equation}		
	\end{itemize}
\end{lemma}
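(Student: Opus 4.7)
The plan is to follow P.-L.\ Lions' classical concentration-compactness argument for probability measures, organized around the L\'evy-type concentration function
\[
Q_{n}(R) := \int_{|x|\leq R} \rho_{n}(x)\,{\rm d}x, \qquad R \geq 0.
\]
For each $n$, the map $R \mapsto Q_{n}(R)$ is non-decreasing, takes values in $[0,1]$, and tends to $1$ as $R \to \infty$; note that the $L^{\frac{4}{3}}$-bound plays no role in the statement itself, only the probability-measure structure $\rho_{n} \geq 0$ and $\int \rho_{n} = 1$. First I would extract a subsequence by a Helly-type selection: a diagonal argument over a countable dense set of radii, combined with monotonicity in $R$, produces a subsequence $\{Q_{n_{k}}\}$ and a non-decreasing limit $q : [0,\infty) \to [0,1]$ such that $Q_{n_{k}}(R) \to q(R)$ at every continuity point of $q$ (hence at all but at most countably many $R$). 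Setting
\[
\alpha := \lim_{R \to \infty} q(R) \in [0,1],
\]
the three alternatives will correspond to $\alpha = 0$, $\alpha = 1$, and $0 < \alpha < 1$.

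The two extreme values are straightforward. If $\alpha = 0$, then $q \equiv 0$ is continuous everywhere, so $Q_{n_{k}}(R) \to 0$ for every $R \geq 0$, giving the vanishing case (ii). If $\alpha = 1$, then for each $j \geq 1$ I can pick a continuity point $r_{j}$ of $q$ with $q(r_{j}) > 1 - 1/j$, and a standard diagonal extraction produces a subsequence along which $Q_{n_{k}}(r_{j}) > 1 - 2/j$ whenever $k \geq j$; given any $\epsilon > 0$, choosing $j$ with $2/j < \epsilon$ gives tightness of the sequence inside $B(0, r_{j})$, i.e.\ case (i).

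The genuinely delicate case is $0 < \alpha < 1$. Here I would first select continuity points $R_{j} \to \infty$ of $q$ such that $6 R_{j}$ is also a continuity point of $q$; this is possible because $q$, being monotone, has at most countably many discontinuities, so the set of ``good'' radii is co-countable. Monotonicity of $q$ together with $\lim_{R \to \infty} q(R) = \alpha$ then forces $q(R_{j}) \to \alpha$ and $q(6 R_{j}) \to \alpha$. A further diagonal extraction over $j$ yields indices $n_{k_{j}} \to \infty$ with
\[
|Q_{n_{k_{j}}}(R_{j}) - \alpha| < 1/j, \qquad |Q_{n_{k_{j}}}(6 R_{j}) - \alpha| < 1/j,
\]
and the annular decay in \eqref{split} follows from the telescoping identity $\int_{R_{j} \leq |x| \leq 6R_{j}} \rho_{n_{k_{j}}} = Q_{n_{k_{j}}}(6 R_{j}) - Q_{n_{k_{j}}}(R_{j})$. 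The main obstacle I anticipate is exactly this simultaneous diagonalization: one needs to drive both the radius $R_{j}$ and the index $n_{k_{j}}$ to infinity while pinning $Q_{n_{k_{j}}}$ down at two moving radii $R_{j}$ and $6R_{j}$, and it is the countability of the discontinuity set of a monotone function that makes a consistent choice possible.
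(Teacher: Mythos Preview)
Your proposal is correct and takes essentially the same approach as the paper: both invoke Lions' concentration-compactness via Helly's selection on the concentration functions $Q_n(R)=\int_{|x|\le R}\rho_n$, and then split into cases according to $\alpha=\lim_{R\to\infty}q(R)$. The paper only sketches this and refers to the literature for the details (additionally observing, via the $L^{4/3}$-bound, that $\alpha$ equals the mass of the weak limit $\rho_0$), so your explicit handling of the dichotomy case through continuity points of $q$ and a diagonal extraction is precisely what those references supply.
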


\begin{proof}[Sketch of the proof]
We will not detail the proof which is an adaptation of ideas by Lions \cite{Li-84}, where one introduces another sequence of concentration functions 
$$
f_{n}(t)=\int_{B(0,t)}\rho_{n}(x){\rm d}x.
$$
Then, by Helly's selection principle \cite{He-21}, there exist a subsequence $(n_{k})_{k\geq 1}$ and a nondecreasing non-negative function $f$ such that $f_{n_k}(t)\to f(t)$ as $k\to\infty$ for all $t>0$. Since $\rho_{n_k}\rightharpoonup \rho_{0}$ weakly in $L^{\frac{4}{3}}(\mathbb{R}^{3})$, the number $\alpha$ in (iii) is defined by
$$
\alpha=\lim_{t\to\infty}f(t)=\int_{\mathbb{R}^3}\rho_{0}(x){\rm d}x.
$$
We refer to \cite[Lemma 3.1]{Fr-03} for a similar proof of (iii).
\end{proof}
Invoking Lemma \ref{lem:concentration}, we obtain that a suitable subsequence $\{\rho_{n_{k}}\}$, with $\rho_{n_k}\rightharpoonup \rho_{0}$, satisfies either (i), (ii), or (iii). We now rule out (ii) and (iii) as follows.

\emph{Vanishing does not occur.} 
Suppose that $\{\rho_{n_{k}}\}$ exhibits property (ii), we deduce from it and the weak convergence $\rho_{n_{k}}\rightharpoonup \rho_{0}$ in $L^{\frac{4}{3}}(\mathbb{R}^{3})$ that $\int_{B(0,R)}\rho_{0}(x){\rm d}x=0$ for all $R<\infty$. This implies that $\rho_{0}=0$ almost everywhere in $\mathbb{R}^3$. Then we infer from the weak limit $\rho_{n_{k}}\rightharpoonup 0$ in $L^{\frac{4}{3}}(\mathbb{R}^{3})$ and the conditions $(V_1)$--$(V_2)$ that we must have
$$
\lim_{k\to\infty}\int_{\mathbb{R}^{3}}V(x)\rho_{n_{k}}(x){\rm d}x = 0.
$$
Thus, we obtain that
\begin{equation} \label{e-e}
E_{\tau}(1)=\lim_{k\to\infty}\mathcal{E}_{\tau}(\rho_{n_{k}})\geq E_{\tau}^{\infty}(1).
\end{equation}
It is well-known result (see \cite[Theorem 5]{LiYa-87}) that, up to translation, $E_{\tau}^{\infty}(1)$ possesses a unique minimizer $\rho_{\infty}$. By the negativity of $V$ we have
$$
E_{\tau}^{\infty}(1)=\mathcal{E}^{\infty}_{\tau}(\rho_{\infty})\geq E_{\tau}(1)-\int_{\mathbb{R}^3}V(x)\rho_{\infty}(x){\rm d}x > E_{\tau}(1),
$$
which contradicts \eqref{e-e}. Hence (ii) cannot occur.

\emph{Dichotomy does not occur.} 
Let us suppose that (iii) is true for $\{\rho_{n_{k}}\}$. Let $0\leq \chi \leq 1$ be a fixed smooth function on $\mathbb R^3$ such that $\chi(x) \equiv 1$ for $|x|<1$ and $\chi(x) \equiv 0$ for $|x|\geq 2$. Given the sequence $\{R_{k}\}$ from Lemma \ref{lem:concentration}, we define the functions $\chi_{R_{k}}(x) = \chi(x/R_{k})$ and $\zeta_{R_{k}}(x) = \sqrt{1-\chi_{R_{k}}(x)^{2}}$. Likewise, we define the sequences $\{\rho_{k}^{(1)}\}_{k\in\mathbb N}$ and $\{\rho_{k}^{(2)}\}_{k\in\mathbb N}$ by
$$
\rho_{k}^{(1)}(x) = \chi_{R_{k}}(x)^{2}\rho_{n_{k}}(x) \quad \text{and} \quad \rho_{k}^{(2)}(x) = \zeta_{R_{k}}(x)^{2}\rho_{n_{k}}(x).
$$
The direct term is separated as follows
\begin{equation}\label{non-dichotomy-1}
D(\rho_{n_{k}},\rho_{n_{k}}) =  D(\rho_{k}^{(1)},\rho_{k}^{(1)}) + D(\rho_{k}^{(2)},\rho_{k}^{(2)}) + 2D(\rho_{k}^{(1)},\rho_{k}^{(2)}).
\end{equation}
To show that the last term in \eqref{non-dichotomy-1} is of order one, we write 
$$
\zeta_{R_{k}}(y)^{2} = \zeta_{3R_{k}}(y)^{2} + \zeta_{R_{k}}(y)^{2} - \zeta_{3R_{k}}(y)^{2}
$$
and remark that $\chi_{R_{k}}(x)^{2}|x-y|^{-1}\zeta_{3R_{k}}(y)^{2} \leq R_{k}^{-1}$. So it remains to treat the term with $\chi_{R_{k}}(x)^{2}[\zeta_{R_{k}}(y)^{2}-\zeta_{3R_{k}}(y)^{2}]$, for which we use the Hardy--Littlewood--Sobolev inequality (see \cite[Theorem 4.3]{LiLo}) and \eqref{split} to obtain
$$
D(\chi_{R_{k}}^{2}\rho_{n_{k}},(\zeta_{R_{k}}^{2}-\zeta_{3R_{k}}^{2})\rho_{n_{k}}) \leq C\|\rho_{n_{k}}\|_{L^{\frac{4}{3}}}^{\frac{4}{3}} \|\rho_{n_{k}}\|_{L^{1}}^{\frac{1}{3}} \|\rho_{n_{k}}\mathbbm{1}(R_{k}\leq |\cdot| \leq 6R_{k})\|_{L^{1}}^{\frac{1}{3}} = o(1)_{k\to\infty}.
$$
On the other hand, since $V$ satisfies $(V_1)$--$(V_2)$, we have
\begin{equation}\label{non-dichotomy-2}
\int_{\mathbb{R}^{3}}V(x)\rho_{n_{k}}(x){\rm d}x = \int_{\mathbb{R}^{3}}V(x)\rho_{k}^{(1)}(x){\rm d}x + o(1)_{k\to\infty}.
\end{equation}
In addition, by the superadditivity of the function $\rho\mapsto j_{m}(\rho)$, we have
\begin{equation}\label{non-dichotomy-3}
\int_{\mathbb{R}^3}j_{m}(\rho_{n_{k}}){\rm d}x\geq \int_{\mathbb{R}^3}j_{m}(\rho_{k}^{(1)}){\rm d}x + \int_{\mathbb{R}^3}j_{m}(\rho_{k}^{(2)}){\rm d}x.
\end{equation}

Combining \eqref{non-dichotomy-1}, \eqref{non-dichotomy-2} and \eqref{non-dichotomy-3} we have
\begin{equation}\label{ineq:split energy}
\mathcal{E}_{\tau}(\rho_{n_{k}})\geq\mathcal{E}_{\tau}(\rho_{k}^{(1)})+\mathcal{E}^{\infty}_{\tau}(\rho_{k}^{(2)}) + o(1)_{k\to\infty}.
\end{equation}
We infer from \eqref{ineq:split energy} that
\begin{equation}\label{ineq:invert-binding-0}
E_{\tau}(1)=\lim_{k\to\infty}\mathcal{E}_{\tau}(\rho_{n_{k}})\geq E_{\tau}(\alpha)+E_{\tau}^{\infty}(1-\alpha),
\end{equation}
where we used that $\int_{\mathbb R^3}\rho_{k}^{(1)}(x){\rm d}x \to \alpha$, by \eqref{split}, and the continuity of $E_{\tau}(\alpha)$ and $E_{\tau}^{\infty}(\alpha)$ in $0<\alpha<1$. In summary, it follows from \eqref{ineq:invert-binding-0} and \eqref{ineq:binding-0} that 
\begin{equation}\label{eq:split energy}
E_{\tau}(1)= E_{\tau}(\alpha)+E_{\tau}^{\infty}(1-\alpha).
\end{equation}

Moreover, $\{\rho_{k}^{(1)}\}$ and $\{\rho_{k}^{(2)}\}$ are minimizing sequences of $E_{\tau}(\alpha)$ and $E_{\tau}^{\infty}(1-\alpha)$, respectively.
Note that, it follows from a simple scaling $\rho(x)\mapsto \rho((1-\alpha)^{-\frac{1}{3}}x)$
that $E_{\tau}^{\infty}(1-\alpha) = (1-\alpha)E_{(1-\alpha)^{\frac{2}{3}}\tau}^{\infty}(1)$. Since $(1-\alpha)^{\frac{2}{3}}\tau<\tau<\tau_{c}$, we deduce from \cite[Theorem 5]{LiYa-87} that $E_{\tau}^{\infty}(1-\alpha)$ has a unique minimizer, up to translations. Since we already know that $\int_{\mathbb R^3}\rho_{k}^{(1)}(x){\rm d}x \to \int_{\mathbb R^3}\rho_{0}(x){\rm d}x$, we claim by the same arguments in \cite[page 125]{Li-84} that we must have
$$
\lim_{k\to\infty}D(\rho_{k}^{(1)},\rho_{k}^{(1)}) = D(\rho_{0},\rho_{0}).
$$
On the other hand, it follows from the conditions $(V_1)$--$(V_2)$ that
$$
\lim_{k\to\infty}\int_{\mathbb{R}^{3}}V(x)\rho_{k}^{(1)}(x){\rm d}x = \int_{\mathbb{R}^{3}}V(x)\rho_{0}(x){\rm d}x.
$$
In addition, the convex functional $\int_{\mathbb{R}^{3}}j_{m}(\rho(x)){\rm d}x$ being strongly lower semi-continuous on $L^{\frac{4}{3}}(\mathbb R^3)$ by Fatou's lemma, it is weakly lower semi-continuous and we have
$$
\liminf_{k\to\infty}\int_{\mathbb{R}^{3}}j_{m}(\rho_{k}^{(1)}(x)){\rm d}x\geq\int_{\mathbb{R}^{3}}j_{m}(\rho_{0}(x)){\rm d}x.
$$
Hence, we conclude that
$$
E_{\tau}(\alpha) = \lim_{k\to\infty}\mathcal{E}_{\tau}(\rho_{k}^{(1)}) \geq \mathcal{E}_{\tau}(\rho_{0}) \geq E_{\tau}(\alpha).
$$

This implies that $\rho_{0}>0$ is a minimizer of $E_{\tau}(\alpha)$, and it satisfies the following variational equation
\begin{equation}\label{compact}
\sqrt{\eta_{0}(x)^{2}+m^{2}}-\tau(|\cdot|^{-1}\star\rho_{0})(x)+V(x)-\mu=0
\end{equation}
where $\eta_{0}=(6\pi^2\rho_{0}/q)^{\frac{1}{3}}$ and $\mu$ is a real-valued Lagrange multiplier. We note that \eqref{compact} implies that $\rho_{0}$ is compactly supported. If not, by letting $|x|\to\infty$ one has that $\mu\geq m$, since $(|\cdot|^{-1}\star\rho_{0})(x)$
and $V(x)$ tend to zero in \eqref{compact}. We then would
have by \eqref{compact}, $\rho_{0}(x)\geq C(|\cdot|^{-1}\star\rho_{0})^{3}(x)$, where constant $C$ is positive. For sufficiently large $|x|$, we see that $\rho_{0}(x)\geq C|x|^{-3}$. This implies that $\rho_{0}$ is not integrable, contradicting the
fact that $\int_{\mathbb{R}^{3}}\rho_{0}(x){\rm d}x=\alpha$. By the same argument we can also prove that the minimizer of $E_{\tau}^{\infty}(1-\alpha)$
is compactly supported (see also \cite[Appendix A]{LiOx-81}).

\begin{lemma}[Strict binding inequality]\label{ineq:strict-binding}
	Fix $q\geq 1$ and $m>0$. Assume that $V$ satisfies $(V_1)$--$(V_2)$. Then for $0<\alpha<1$ as above, we have
	$$
	E_{\tau}(1)<E_{\tau}(\alpha)+E_{\tau}^{\infty}(1-\alpha).
	$$
\end{lemma}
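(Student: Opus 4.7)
The strategy is to patch together the two compactly-supported minimizers that were already constructed in the preceding ``dichotomy'' analysis, place them far apart in space, and exploit the long-range attractive gravitational cross-interaction, together with the attractive external potential, to obtain a trial density for $E_{\tau}(1)$ whose energy strictly beats $E_{\tau}(\alpha)+E_{\tau}^{\infty}(1-\alpha)$.

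Concretely, let $\rho_{0}$ be the compactly-supported minimizer of $E_{\tau}(\alpha)$ exhibited just above the statement, and let $\rho_{\infty}$ be a compactly-supported minimizer of $E_{\tau}^{\infty}(1-\alpha)$ (whose existence, up to translation, was noted there by appealing to \cite[Theorem 5]{LiYa-87} after the scaling $\rho(x)\mapsto\rho((1-\alpha)^{-1/3}x)$ and the fact that $(1-\alpha)^{2/3}\tau<\tau_{c}$). For $y\in\mathbb{R}^{3}$ with $|y|$ large enough, define the translate $\tilde{\rho}_{\infty}(x):=\rho_{\infty}(x-y)$ so that $\operatorname{supp}\tilde{\rho}_{\infty}\cap\operatorname{supp}\rho_{0}=\emptyset$, and set $\rho_{\rm trial}:=\rho_{0}+\tilde{\rho}_{\infty}$. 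Then $\rho_{\rm trial}\ge 0$ lies in $L^{1}\cap L^{\frac{4}{3}}(\mathbb{R}^{3})$ and integrates to $\alpha+(1-\alpha)=1$, so it is admissible for $E_{\tau}(1)$. By disjointness of supports the kinetic term is exactly additive, $\int j_{m}(\rho_{\rm trial})=\int j_{m}(\rho_{0})+\int j_{m}(\tilde{\rho}_{\infty})$, while the direct term expands as
\[
D(\rho_{\rm trial},\rho_{\rm trial})=D(\rho_{0},\rho_{0})+D(\tilde{\rho}_{\infty},\tilde{\rho}_{\infty})+2D(\rho_{0},\tilde{\rho}_{\infty}).
\]
Using the translation invariance of $\mathcal{E}_{\tau}^{\infty}$, a direct bookkeeping yields
\[
\mathcal{E}_{\tau}(\rho_{\rm trial})=E_{\tau}(\alpha)+E_{\tau}^{\infty}(1-\alpha)+\int_{\mathbb{R}^{3}}V(x)\tilde{\rho}_{\infty}(x)\,{\rm d}x-2\tau D(\rho_{0},\tilde{\rho}_{\infty}).
\]

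Both extra terms are nonpositive, and the gravitational one is strictly negative: since $\rho_{0}$ and $\tilde{\rho}_{\infty}$ are nonnegative and nontrivial, $D(\rho_{0},\tilde{\rho}_{\infty})>0$, so $-2\tau D(\rho_{0},\tilde{\rho}_{\infty})<0$, while $\int V\tilde{\rho}_{\infty}\le 0$ by $(V_{1})$. Hence $E_{\tau}(1)\le\mathcal{E}_{\tau}(\rho_{\rm trial})<E_{\tau}(\alpha)+E_{\tau}^{\infty}(1-\alpha)$, which is the claimed strict inequality; it directly contradicts \eqref{eq:split energy} and completes the exclusion of the dichotomy alternative. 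The only delicate input is the compact support of both $\rho_{0}$ and $\rho_{\infty}$: this is what makes the kinetic energies add exactly on disjoint supports and lets $|y|$ be taken arbitrarily large without altering the individual energies. This compactness was established in the paragraph preceding the lemma from the Euler--Lagrange equation \eqref{compact} and its translation-invariant analogue, via the asymptotic analysis that forces $\mu\ge m$ and then derives a contradiction with integrability unless $\operatorname{supp}\rho_{0}$ is bounded.
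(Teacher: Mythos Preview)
Your proof is correct and follows essentially the same route as the paper: both construct the trial state $\rho_{0}+\tilde{\rho}_{\infty}$ with disjoint compact supports, use additivity of the kinetic term, expand the direct term, and obtain strict inequality from the positivity of the cross term $D(\rho_{0},\tilde{\rho}_{\infty})$ together with $V\le 0$. The only cosmetic difference is that the paper fixes the translation at $3R$ and records the explicit bound $2D(\rho_{0},\tilde{\rho}_{\infty})\ge \alpha(1-\alpha)/(5R)$, whereas you simply invoke $D(\rho_{0},\tilde{\rho}_{\infty})>0$, which is all that is needed.
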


\begin{proof}
	We assume that $E_{\tau}^{\infty}(1-\alpha)$ possesses a minimizer $\rho_{\infty}$. As in the proof of Lemma~\ref{lem:ineq:binding}, we denote by $R>0$ the radius of a ball in $\mathbb{R}^{3}$ which contains the supports of $\rho_{0}$ and $\rho_{\infty}$, and we define the same translated operator $\tilde{\rho}_{\infty}(x)=\rho_{\infty}(x-3R)$ and the trial density operator $\rho_{\alpha}:=\rho_{0}+\tilde{\rho}_{\infty}$. By construction we have $\int_{\mathbb{R}^{3}}\rho_{\alpha}(x){\rm d}x=1$ and $\rho_{0}\tilde{\rho}_{\infty}=0$. Noticing that $\rho_{0}(x)\tilde{\rho}_{\infty}(y)=0$ when $|x-y|>5R$, we have
	\begin{equation}\label{ineq:strict-binding-1}
	-D(\rho_{\alpha},\rho_{\alpha}) + D(\rho_{0},\rho_{0}) + D(\tilde{\rho}_{\infty},\tilde{\rho}_{\infty}) = -2D(\rho_{0},\tilde{\rho}_{\infty})\leq -\frac{\alpha(1-\alpha)}{5R}<0.
	\end{equation}
	Combining \eqref{eq:binding-3} and \eqref{ineq:strict-binding-1} together with the negativity of the external potential $V$ and the translation invariance of $\mathcal{E}^{\infty}_{\tau}$ we conclude that
	$$
	E_{\tau}(1) \leq \mathcal{E}_{\tau}(\rho_{\alpha}) <\mathcal{E}_{\tau}(\rho_{0})+\mathcal{E}^{\infty}_{\tau}(\rho_{\infty}) = E_{\tau}(\alpha)+E_{\tau}^{\infty}(1-\alpha).
	$$
\end{proof}

The Lemma \ref{ineq:strict-binding}, together with \eqref{eq:split energy}, gives us a contradiction. Therefore (iii) of Lemma \ref{lem:concentration} is ruled out.

\emph{Conclusion of the proof of Theorem~\ref{thm:existence of minimizers} (iii).} 
We have shown that there exists a subsequence $\{\rho_{n_{k}}\}$ such that (i) of Lemma \ref{lem:concentration} holds true. Then we have
$$
1 \geq \int_{\mathbb{R}^3}\rho_{0}(x){\rm d}x \geq \int_{|x|\leq R}\rho_{0}(x){\rm d}x = \lim_{k\to\infty}\int_{|x|\leq R}\rho_{n_{k}}(x){\rm d}x \geq 1-\epsilon,
$$
for every $\epsilon>0$ and suitable $R = R(\epsilon) < \infty$. This implies that $\int_{\mathbb{R}^{3}}\rho_{0}(x){\rm d}x=1$. Now we prove that $\rho_{0}$ is indeed a minimizer of $E_{\tau}(1)$. We first deduce from the norm preservation and the same arguments in \cite[page 125]{Li-84} that we have 
\begin{equation}\label{ineq:strict-binding-min-1}
\lim_{k\to \infty}D(\rho_{n_k},\rho_{n_k})= D(\rho_{0},\rho_{0}).
\end{equation}
On the other hand, it follows from the conditions $(V_1)$--$(V_2)$ that
\begin{equation}\label{ineq:strict-binding-min-2}
\lim_{k\to\infty}\int_{\mathbb{R}^{3}}V(x)\rho_{n_k}(x){\rm d}x = \int_{\mathbb{R}^{3}}V(x)\rho_{0}(x){\rm d}x.
\end{equation}
In addition, the convex functional $\int_{\mathbb{R}^{3}}j_{m}(\rho(x)){\rm d}x$ being strongly lower semi-continuous on $L^{\frac{4}{3}}(\mathbb R^3)$ by Fatou's lemma, it is weakly lower semi-continuous and we have
\begin{equation}\label{ineq:strict-binding-min-3}
\liminf_{k\to\infty}\int_{\mathbb{R}^{3}}j_{m}(\rho_{n_k}(x)){\rm d}x\geq\int_{\mathbb{R}^{3}}j_{m}(\rho_{0}(x)){\rm d}x.
\end{equation}
Combining \eqref{ineq:strict-binding-min-1}, \eqref{ineq:strict-binding-min-2} and \eqref{ineq:strict-binding-min-3} we obtain
$$
E_{\tau}(1)=\lim_{k\to\infty}\mathcal{E}_{\tau}(\rho_{n_k})\geq\mathcal{E}_{\tau}(\rho_{0})\geq E_{\tau}(1)
$$
which implies that $\rho_{0}$ is a minimizer of $E_{\tau}(1)$.

\emph{Proof of Theorem~\ref{thm:existence of minimizers} (i)-(ii).}
To prove that there is no minimizer of \eqref{eq:neutron star energy} as soon as $\tau\geq \tau_c$ and $V \not \equiv 0$ satisfies conditions $(V_1)$--$(V_2)$, we proceed as follow. Let $Q$ be an optimizer in \eqref{ineq:HLS}. Since $\sqrt{|p|^2+m^2}\leq |p|+m^2/(2|p|)$, we find that $j_{m}(\rho)\leq K_{\rm cl}\rho^{\frac{4}{3}}+\frac{9}{16}m^2K_{\rm cl}^{-1}\rho^{\frac{2}{3}}$. Using this we have, for $\ell>0$ and $x_0\in\mathbb{R}^3$,
\begin{align}\label{non-existence}
\mathcal{E}_{\tau}(\ell^{3}Q(\ell(x-x_0))\leq&\left(1-\frac{\tau}{\tau_{c}}\right)\ell K_{\rm cl}\int_{\mathbb{R}^3} Q(x)^{\frac{4}{3}}{\rm d}x+\frac{9m^{2}}{16\ell K_{\rm cl}}\int_{\mathbb{R}^3} Q(x)^{\frac{2}{3}}{\rm d}x\nonumber \\
&+\int_{\mathbb{R}^3}V(\ell^{-1}x+x_0)Q(x){\rm d}x.
\end{align}
Since the function $x\mapsto Q(x)$ has compact support (see, e.g. \cite[Appendix A]{LiOx-81}), the convergence
$$
\lim_{\ell\to\infty}\int_{\mathbb{R}^3}V(\ell^{-1}x+x_0)Q(x){\rm d}x = V(x_0)
$$
holds for almost every $x_0\in\mathbb{R}^3$ (see, e.g.  \cite{LiLo}).

Hence, it follows from \eqref{non-existence} that, for $\tau=\tau_c$ and $V$ satisfies $(V_1)$--$(V_2)$, 
$$
E_{\tau}(1)\leq\lim_{\ell\to\infty}\mathcal{E}_{\tau}(\ell^{3}Q(\ell x))=\inf_{x\in\mathbb{R}^3}V(x).
$$
We argue that there does not exist a minimizer of $E_{\tau}(1)$ with $\tau=\tau_{c}$ by contradiction as follows. We suppose that $\rho\in L^{1}\cap L^{\frac{4}{3}}(\mathbb R^3)$ is a minimizer of $E_{\tau}(1)$ with $\tau=\tau_{c}$. It follows from the strict inequality $\sqrt{|p|^2+m^2}>|p|$ that
$$
\inf_{x\in\mathbb{R}^3}V(x)\geq \mathcal{E}_{\tau}(\rho) > {\mathcal{E}_{\tau}(\rho)}|_{m=0}\geq\inf_{x\in\mathbb{R}^3}V(x)
$$
which is a contradiction. Hence we have proved that, when $\tau=\tau_c$, no minimizer exists for
$E_{\tau}(1)=\inf_{x\in\mathbb{R}^3}V(x)$.

For $\tau>\tau_{c}$, it follows easily from \eqref{non-existence} that 
$$
E_{\tau}(1)\leq\lim_{\ell\to\infty}\mathcal{E}_{\tau}(\ell^{3}Q(\ell x))=-\infty.
$$
This implies that $E_{\tau}(1)$ is unbounded from below for any $\tau>\tau_{c}$, and hence the non-existence of minimizers of $E_{\tau}(1)$ is therefore proved.

\section{Blow-up behavior} 
\label{sec:behavior}

In this section, we prove the blow-up profile of minimizers of $E_{\tau}(1)$ when $\tau\uparrow \tau_{c}$, as stated in Theorem \ref{thm:behavior}. Let $\tau_{n}\uparrow \tau_{c}$ as $n\to\infty$ and let $\rho_{n}:=\rho_{\tau_{n}}$ be a non-negative minimizer of $E_{\tau_{n}}(1)$. We start with the following two preliminary lemmas.

\begin{lemma}
	There exist constants $M_{2}^\infty>M_{1}^\infty>0$ and $M_{1}>M_{2}>0$ independent of $\tau_{n}$ such that, for $n$ sufficiently large,
	\begin{equation}\label{ineq:estimate e^0}
	M_{1}^\infty(\tau_{c}-\tau_{n})^{\frac{1}{2}}\leq E^{\infty}_{\tau_{n}}(1)\leq M_{2}^\infty(\tau_{c}-\tau_{n})^{\frac{1}{2}}
	\end{equation} 
	if $V=0$, and
	\begin{equation}\label{ineq:estimate e^s}
	-M_{1}(\tau_{c}-\tau_{n})^{\frac{s}{s-1}}\leq E_{\tau_{n}}(1)\leq -M_{2}(\tau_{c}-\tau_{n})^{\frac{s}{s-1}}
	\end{equation} 
	if $V$ is defined as in \eqref{external potential}.
\end{lemma}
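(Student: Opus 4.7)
\emph{Upper bounds.} My plan is to prove both upper bounds by evaluating $\mathcal{E}_{\tau}$ on the trial density $\rho_{\ell}(x):=\ell^{3} Q(\ell(x-x_{0}))$ with $x_{0}=0$ when $V\equiv 0$ and $x_{0}=x_{j}\in\mathcal{Z}$ when $V$ is of the form \eqref{external potential}; by $\|Q\|_{L^{1}}=1$ this has mass one. The pointwise bound $\sqrt{|p|^{2}+m^{2}} \leq |p| + m^{2}/(2|p|)$, already used in the non-existence argument of Section~\ref{sec:existence-minimizer}, gives $j_{m}(\rho) \leq K_{\rm cl}\rho^{\frac{4}{3}} + \tfrac{9}{16}m^{2}K_{\rm cl}^{-1}\rho^{\frac{2}{3}}$, and combined with the identities from \eqref{cond:LE} (in particular $K_{\rm cl}\|Q\|_{L^{4/3}}^{4/3} = \tau_{c}$ and $D(Q,Q)=1$) this yields
$$\mathcal{E}_{\tau}(\rho_{\ell}) \leq \ell(\tau_{c}-\tau) + \frac{9 m^{2}}{16 K_{\rm cl}\,\ell}\int_{\mathbb{R}^{3}} Q^{\frac{2}{3}}{\rm d}x + \int_{\mathbb{R}^{3}} V(\ell^{-1}y+x_{0}) Q(y){\rm d}y.$$
When $V\equiv 0$ the last term vanishes, and optimizing $\ell$ in the balance of the first two produces $\ell \asymp (\tau_{c}-\tau_{n})^{-1/2}$ and the required upper bound of order $(\tau_{c}-\tau_{n})^{1/2}$. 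When $V$ is of the form \eqref{external potential}, the compact support of $Q$ isolates the singularity at $x_{j}$ from the other $M-1$ ones and gives $\int V(\ell^{-1}y+x_{j})Q(y){\rm d}y = -z\ell^{s}\int_{\mathbb{R}^{3}} Q(y)/|y|^{s}{\rm d}y + O(1)$ as $\ell\to\infty$; balancing $\ell(\tau_{c}-\tau)$ against $-z\ell^{s}\int Q/|y|^{s}$ produces $\ell \asymp (\tau_{c}-\tau_{n})^{-1/(1-s)}$ and the required upper bound of order $-(\tau_{c}-\tau_{n})^{s/(s-1)}$.

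\emph{Lower bound when $V\equiv 0$.} The key input is the elementary inequality $\sqrt{a^{2}+b^{2}} \geq a\cos\alpha + b\sin\alpha$, valid for $a,b \geq 0$ and $\alpha \in [0,\pi/2]$. Applied pointwise with $a=|p|$, $b=m$ and integrated over the Fermi ball, it gives $\int j_{m}(\rho){\rm d}x \geq (\cos\alpha)K_{\rm cl}\int\rho^{\frac{4}{3}}{\rm d}x + m\sin\alpha$ (using $\int\rho = 1$). Combining with the Hardy--Littlewood--Sobolev bound $\tau D(\rho,\rho) \leq \tau\sigma_{f}\int \rho^{\frac{4}{3}}{\rm d}x$ and $\tau_{c}=\sigma_{f}^{-1}K_{\rm cl}$ yields
$$\mathcal{E}_{\tau}^{\infty}(\rho) \geq \sigma_{f}(\tau_{c}\cos\alpha - \tau)\int_{\mathbb{R}^{3}}\rho^{\frac{4}{3}}{\rm d}x + m\sin\alpha.$$
Choosing $\cos\alpha = \tau/\tau_{c}$ kills the coefficient of $\int\rho^{\frac{4}{3}}{\rm d}x$ and leaves $E_{\tau_{n}}^{\infty}(1) \geq m\sin\alpha = m\tau_{c}^{-1}\sqrt{(\tau_{c}-\tau_{n})(\tau_{c}+\tau_{n})} \geq M_{1}^{\infty}(\tau_{c}-\tau_{n})^{1/2}$ for $\tau_{n}$ close to $\tau_{c}$.

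\emph{Lower bound when $V$ is as in \eqref{external potential}.} The $m^{2}/\ell$ correction is subleading (of order $(\tau_{c}-\tau)^{1/(1-s)} = o((\tau_{c}-\tau)^{s/(s-1)})$ as $\tau\uparrow\tau_{c}$), so I use only $\int j_{m}(\rho){\rm d}x \geq K_{\rm cl}A$ with $A := \int\rho^{\frac{4}{3}}{\rm d}x$; combined with HLS this gives $\int j_{m}{\rm d}x - \tau D(\rho,\rho) \geq \sigma_{f}(\tau_{c}-\tau)A$. To bound $\int V\rho$, for each $i$ I split $\int \rho(x)/|x-x_{i}|^{s_{i}}{\rm d}x$ at $|x-x_{i}|=R$: H\"older with $p=4/3$, $p'=4$ on the ball (integrable precisely because $s_{i}<3/4$) gives $\int_{|x-x_{i}|\leq R} \rho/|x-x_{i}|^{s_{i}}{\rm d}x \leq C R^{3/4-s_{i}}A^{3/4}$, while $\|\rho\|_{1}=1$ on the complement gives $R^{-s_{i}}$; optimizing over $R$ produces $\int\rho/|x-x_{i}|^{s_{i}}{\rm d}x \leq C(1+A^{s_{i}})$ and hence $\int V\rho \geq -C(1+A^{s})$. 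Minimizing the resulting bound $\sigma_{f}(\tau_{c}-\tau)A - CA^{s} - C$ over $A\geq 0$ (using $0<s<1$) then gives $E_{\tau_{n}}(1) \geq -M_{1}(\tau_{c}-\tau_{n})^{s/(s-1)}$ for $\tau_{n}$ close to $\tau_{c}$, the constant $-C$ being absorbed since $(\tau_{c}-\tau_{n})^{s/(s-1)}\to+\infty$.

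The main obstacle I anticipate is the singular-$V$ lower bound: the H\"older step must capture the exact exponent $A^{s}$ (which forces $p=4/3$, $p'=4$ and hence the condition $s<3/4$), and one must verify that the contributions from the subdominant singularities ($s_{i}<s$ or $x_{i}\notin\mathcal{Z}$), the discarded $m$-correction to the kinetic, and the constant H\"older error are all negligible compared with the divergent leading term $(\tau_{c}-\tau_{n})^{s/(s-1)}$ as $\tau_{n}\uparrow\tau_{c}$.
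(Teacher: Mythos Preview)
Your proposal is correct. The upper bounds and the lower bound for singular $V$ follow the paper's route almost verbatim: the same trial state $\ell^{3}Q(\ell(x-x_{0}))$ with the same kinetic bound $j_{m}\leq K_{\rm cl}\rho^{4/3}+\tfrac{9}{16}m^{2}K_{\rm cl}^{-1}\rho^{2/3}$, and the same H\"older splitting of $\int\rho/|x-x_{i}|^{s_{i}}$ at a radius to be optimized (the paper keeps the cut-off $L$ free, applies Young to $(\tau_{c}-\tau)A - CL^{(3-4s)/4}A^{3/4}$, and then chooses $L=(\tau_{c}-\tau_{n})^{1/(1-s)}$; you optimize the cut-off first to reach $CA^{s}$ and then minimize over $A$, which is the same computation in a different order).

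Your lower bound for $V\equiv 0$, however, is genuinely different and cleaner. The paper argues by bootstrap: it first uses the already-established upper bound together with \eqref{ineq:HLS} to control $\int j_{m}(\rho_{n})$, then invokes the operator inequality $\sqrt{|p|^{2}+m^{2}}\geq |p|+\tfrac{m^{2}}{2}(|p|^{2}+m^{2})^{-1/2}$ to introduce an auxiliary functional $\tilde{j}_{m}$, and finally combines the pointwise bound $j_{m}\tilde{j}_{m}\geq\tfrac{9}{8}\rho$ with H\"older to get $\int\tilde{j}_{m}(\rho_{n})\geq \tfrac{9}{8}(\int j_{m}(\rho_{n}))^{-1}$. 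Your single inequality $\sqrt{|p|^{2}+m^{2}}\geq |p|\cos\alpha+m\sin\alpha$ with $\cos\alpha=\tau/\tau_{c}$ yields $E^{\infty}_{\tau}(1)\geq m\tau_{c}^{-1}\sqrt{\tau_{c}^{2}-\tau^{2}}$ in one line, avoids the auxiliary $\tilde{j}_{m}$, and does not rely on the upper bound. The paper's route, on the other hand, is tailored to the later blow-up analysis (the same $\tilde{j}_{m}$ reappears in \eqref{liminf:kinetic}), so it is not wasted effort in context.
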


\begin{proof}
	We start with the proof of the upper bound in \eqref{ineq:estimate e^0} and \eqref{ineq:estimate e^s}. If $V$ is defined as in \eqref{external potential}, it follows from \eqref{non-existence} that, for $\ell>0$,
	$$
	E_{\tau_{n}}(1)\leq\left(1-\frac{\tau}{\tau_{c}}\right)\ell K_{\rm cl}\int_{\mathbb{R}^3} Q(x)^{\frac{4}{3}}{\rm d}x+\frac{9 m^{2}}{16\ell K_{\rm cl}}\int_{\mathbb{R}^3} Q(x)^{\frac{2}{3}}{\rm d}x-\ell^{s}\int_{\mathbb{R}^3}\frac{Q(x)}{|x|^{s}}{\rm d}x.
	$$
	By taking $\ell=C(\tau_{c}-\tau_{n})^{\frac{1}{s-1}}$, for some suitable positive constant $C$, we obtain the desired upper bound in \eqref{ineq:estimate e^s}. In the case $V=0$, the term $-\ell^{s}\int_{\mathbb{R}^3}\frac{Q(x)}{|x|^{s}}{\rm d}x$ does not appear in the above estimation, hence the desired upper bound in \eqref{ineq:estimate e^0} follows by taking $\ell=C(\tau_{c}-\tau_{n})^{-\frac{1}{2}}$.	
	
	Next we prove the lower bound in \eqref{ineq:estimate e^0}. From \eqref{ineq:HLS} and the upper bound of $E^{\infty}_{\tau_{n}}(1)$ in \eqref{ineq:estimate e^0} we see that
	$$
	M_{2}^\infty (\tau_{c}-\tau_{n})^{\frac{1}{2}} \geq \left(1-\frac{\tau_{n}}{\tau_{c}}\right)\int_{\mathbb{R}^3}j_{m}(\rho_{n}(x)){\rm d}x,
	$$
	which implies that
	\begin{equation}\label{fake-kinetic-0}
	\int_{\mathbb{R}^3}j_{m}(\rho_{n}(x)){\rm d}x\leq M_{2}^\infty \tau_{c} (\tau_{c}-\tau_{n})^{-\frac{1}{2}}.
	\end{equation}
	On the other hand, from the operator inequality $\sqrt{|p|^2+m^2} \geq |p| + m^2/(2\sqrt{|p|^2+m^2})$ we have
	\begin{equation}\label{ineq:fake-kinetic}
	\int_{\mathbb{R}^3}j_{m}(\rho_{n}(x)){\rm d}x\geq K_{\rm cl}\int_{\mathbb{R}^3}\rho_{n}(x)^{\frac{4}{3}}{\rm d}x + \frac{m^2}{2}\int_{\mathbb{R}^3}\tilde{j}_{m}(\rho_{n}(x)){\rm d}x
	\end{equation}
	where
	\begin{align}
	\tilde{j}_{m}(\rho)&=\frac{q}{(2\pi)^3}\int_{|p|<(6\pi^2\rho/q)^{\frac{1}{3}}}\frac{1}{\sqrt{|p|^2+m^2}}{\rm d}p \nonumber \\
	& = \frac{q}{4\pi^2}\left[\eta\sqrt{\eta^2+m^2}-m^2\ln{\left(\frac{\eta+\sqrt{\eta^2+m^2}}{m}\right)}\right], \quad \eta=\left(\frac{6\pi^2\rho}{q} \right)^{\frac{1}{3}}.\label{fake-kinetic}
	\end{align}
	Using H\"older's inequality and the fact that $j_{m}(\rho_{n}) \tilde{j}_{m}(\rho_{n})\geq \frac{9}{8}\rho_{n}$ we have
\begin{equation}\label{fake-kinetic-1}
	\int_{\mathbb{R}^3}j_{m}(\rho_{n}(x)){\rm d}x \int_{\mathbb{R}^3}\tilde{j}_{m}(\rho_{n}(x)){\rm d}x \geq \frac{9}{8}.
\end{equation}
	We deduce from \eqref{ineq:HLS}, \eqref{fake-kinetic-1} and \eqref{fake-kinetic-0} that
	$$
	E^{\infty}_{\tau_{n}}(1)  = \mathcal{E}_{\tau_{n}}^{\infty}(\rho_{n}) \geq \frac{m^{2}}{2}\int_{\mathbb{R}^3}\tilde{j}_{m}(\rho(x)){\rm d}x \geq  \frac{9m^{2}}{16\int_{\mathbb{R}^3}j_{m}(\rho(x)){\rm d}x} \geq \frac{9m^{2}}{16M_{2}^\infty \tau_{c} (\tau_{c}-\tau_{n})^{-\frac{1}{2}}}
	$$
	which is the lower bound in \eqref{ineq:estimate e^0} as desired.
	
	Now we come to prove the lower bound in \eqref{ineq:estimate e^s}, we proceed as follow. For every $1\leq i\leq M$ and for some $L>0$ small, it follows from H\"older's inequality that
	\begin{align}\label{ineq: bound V}
	\int_{\mathbb{R}^3}\frac{\rho_{n}(x)}{|x-x_i|^{s_{i}}}{\rm d}x &\leq \int_{|x-x_i|\leq L}\frac{\rho_{n}(x)}{|x-x_i|^{s_{i}}}{\rm d}x + \int_{|x-x_i|\geq L}\frac{\rho_{n}(x)}{|x-x_i|^{s_{i}}}{\rm d}x \nonumber \\ 
	&\leq L^{\frac{3-4s_{i}}{4}}\left(\int_{\mathbb{R}^3}\rho_{n}(x)^{\frac{4}{3}}{\rm d}x\right)^{\frac{3}{4}} + L^{-s_{i}}\nonumber \\
	&\leq L^{\frac{3-4s}{4}}\left(\int_{\mathbb{R}^3}\rho_{n}(x)^{\frac{4}{3}}{\rm d}x\right)^{\frac{3}{4}} + L^{-s},
	\end{align}
using $s=\max\{s_{i}:1\leq i\leq M\}$. We deduce from \eqref{ineq: bound V} and \eqref{ineq:HLS} that
	\begin{align}
		E_{\tau_{n}}(1) &\geq \left(1-\frac{\tau_{n}}{\tau_{c}}\right)\int_{\mathbb{R}^3}\rho_{n}(x)^{\frac{4}{3}}{\rm d}x - ML^{\frac{3-4s}{4}}\left(\int_{\mathbb{R}^3}\rho_{n}(x)^{\frac{4}{3}}{\rm d}x\right)^{\frac{3}{4}} - ML^{-s} \label{energy:lower-bound-V} \\
		& \geq -C\frac{L^{3-4s}}{(\tau_{c}-\tau_{n})^3} - ML^{-s}, \nonumber
	\end{align}
	where we have used Young's inequality for the first two terms on the right side of \eqref{energy:lower-bound-V}. Hence, the desired lower bound in \eqref{ineq:estimate e^s} follows by taking $L=(\tau_{c}-\tau_{n})^{\frac{1}{1-s}}$ for $n$ sufficiently large.
\end{proof}

\begin{lemma}
	There exist constants	$K_{2}^\infty>K_{1}^\infty>0$ and $K_{2}>K_{1}>0$ independent of $\tau_{n}$ such that, for $n$ sufficiently large,
	\begin{equation}\label{ineq:estimate D^0}
	K_{1}^\infty(\tau_{c}-\tau_{n})^{-\frac{1}{2}}\leq D(\rho_{n},\rho_{n})\leq 	K_{2}^\infty(\tau_{c}-\tau_{n})^{-\frac{1}{2}}
	\end{equation}
	if $V=0$, and 
	\begin{equation}\label{ineq:estimate D^s}
	K_{1}(\tau_{c}-\tau_{n})^{\frac{1}{s-1}}\leq D(\rho_{n},\rho_{n})\leq 	K_{2}(\tau_{c}-\tau_{n})^{\frac{1}{s-1}}
	\end{equation}
	if $V$ is defined as in \eqref{external potential}.

\end{lemma}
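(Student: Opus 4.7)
The plan is to derive both bounds from the energy estimates \eqref{ineq:estimate e^0}--\eqref{ineq:estimate e^s}, combined with the chain
\begin{equation*}
\int_{\mathbb{R}^3} j_m(\rho(x))\,{\rm d}x \;\geq\; K_{\rm cl}\int_{\mathbb{R}^3}\rho(x)^{4/3}\,{\rm d}x \;\geq\; \tau_c\,D(\rho,\rho),
\end{equation*}
valid for any $0\leq\rho\in L^1\cap L^{4/3}(\mathbb{R}^3)$ with $\|\rho\|_{L^1}=1$, which follows from the operator inequality $\sqrt{|p|^2+m^2}\geq |p|$ and the sharp HLS inequality \eqref{ineq:HLS}.

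For the case $V\equiv 0$, the upper bound in \eqref{ineq:estimate D^0} is immediate: the identity $E^\infty_{\tau_n}(1)=\int j_m(\rho_n)-\tau_n D(\rho_n,\rho_n)$ together with the above chain gives $(\tau_c-\tau_n)D(\rho_n,\rho_n)\leq E^\infty_{\tau_n}(1)$, and \eqref{ineq:estimate e^0} closes the estimate. For the lower bound I notice that the proof of \eqref{ineq:estimate e^0} actually produced $E^\infty_{\tau_n}(1)\geq (m^2/2)\int\tilde{j}_m(\rho_n)\geq 9m^2/(16\int j_m(\rho_n))$, via the operator inequality $\sqrt{|p|^2+m^2}\geq|p|+m^2/(2\sqrt{|p|^2+m^2})$ and the Cauchy--Schwarz step $\int j_m\cdot\int\tilde{j}_m\geq 9/8$. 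Combined with the upper bound in \eqref{ineq:estimate e^0}, this forces $\int j_m(\rho_n)\geq C(\tau_c-\tau_n)^{-1/2}$, so that $\tau_n D(\rho_n,\rho_n)=\int j_m(\rho_n)-E^\infty_{\tau_n}(1)\geq C(\tau_c-\tau_n)^{-1/2}-M_2^\infty(\tau_c-\tau_n)^{1/2}$, which is $\geq K_1^\infty(\tau_c-\tau_n)^{-1/2}$ for $n$ sufficiently large.

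For $V$ as in \eqref{external potential}, the first step is to refine \eqref{ineq: bound V} by optimizing the cut-off: the choice $L=\|\rho_n\|_{L^{4/3}}^{-4/3}$ balances the two terms on its right side and produces the length-scale free bound
\begin{equation*}
\Bigl|\int_{\mathbb{R}^3}V(x)\rho_n(x)\,{\rm d}x\Bigr|\;\leq\; C\,\Bigl(\int_{\mathbb{R}^3}\rho_n(x)^{4/3}\,{\rm d}x\Bigr)^{s}.
\end{equation*}
Since $E_{\tau_n}(1)\leq 0$ by \eqref{ineq:estimate e^s}, plugging $\int j_m(\rho_n)\geq K_{\rm cl}\int\rho_n^{4/3}$ and $D(\rho_n,\rho_n)\leq\sigma_f\int\rho_n^{4/3}$ (HLS) into the identity $E_{\tau_n}(1)=\int j_m-\tau_n D+\int V\rho_n$ gives $\sigma_f(\tau_c-\tau_n)\int\rho_n^{4/3}\leq |\int V\rho_n|\leq C(\int\rho_n^{4/3})^{s}$, whence $\int\rho_n^{4/3}\leq C'(\tau_c-\tau_n)^{1/(s-1)}$ and therefore $|\int V\rho_n|\leq C''(\tau_c-\tau_n)^{s/(s-1)}$. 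The upper bound in \eqref{ineq:estimate D^s} now follows from $(\tau_c-\tau_n)D(\rho_n,\rho_n)\leq E_{\tau_n}(1)+|\int V\rho_n|\leq|\int V\rho_n|$.

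For the matching lower bound, the observation $E_{\tau_n}(1)\geq-|\int V\rho_n|$ (which uses $\int j_m-\tau_n D\geq 0$) combined with \eqref{ineq:estimate e^s} yields $|\int V\rho_n|\geq|E_{\tau_n}(1)|\geq M_2(\tau_c-\tau_n)^{s/(s-1)}$; inverting the refined pointwise estimate above then produces the matching lower bound $\int\rho_n^{4/3}\geq C'''(\tau_c-\tau_n)^{1/(s-1)}$. Rearranging the energy identity into $\tau_n D(\rho_n,\rho_n)=\int j_m(\rho_n)+|E_{\tau_n}(1)|-|\int V\rho_n|\geq K_{\rm cl}\int\rho_n^{4/3}-|\int V\rho_n|$, and noting that $(\tau_c-\tau_n)^{s/(s-1)}\ll(\tau_c-\tau_n)^{1/(s-1)}$ as $n\to\infty$, yields the lower bound in \eqref{ineq:estimate D^s}. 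The main subtlety throughout is the intertwined bookkeeping of $\int\rho_n^{4/3}$, $D(\rho_n,\rho_n)$ and $|\int V\rho_n|$ in the potential case; the crucial step is the length-scale free refinement of \eqref{ineq: bound V}, which directly ties $|\int V\rho_n|$ to $\int\rho_n^{4/3}$ and unlocks the whole bootstrap.
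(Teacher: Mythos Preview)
Your argument is correct. The upper bounds follow the same route as the paper (the optimized cut-off $L=(\int\rho_n^{4/3})^{-1}$ in fact appears verbatim in the paper's Remark immediately after the lemma). The lower bounds, however, are obtained by a genuinely different method.

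For the lower bounds the paper uses a single comparison trick: for any $0<b<\tau_n$,
\[
E_b(1)\ \leq\ \mathcal{E}_b(\rho_n)\ =\ E_{\tau_n}(1)+(\tau_n-b)\,D(\rho_n,\rho_n),
\]
so that $D(\rho_n,\rho_n)\geq\bigl(E_b(1)-E_{\tau_n}(1)\bigr)/(\tau_n-b)$; choosing $b=\tau_n-\beta(\tau_c-\tau_n)$ with $\beta$ large enough and feeding in only the \emph{statements} \eqref{ineq:estimate e^0}--\eqref{ineq:estimate e^s} gives both lower bounds at once. Your route instead reopens the machinery of the previous lemma: in the $V\equiv0$ case you reuse the intermediate inequality $E^\infty_{\tau_n}(1)\geq 9m^2/(16\int j_m(\rho_n))$ to force $\int j_m(\rho_n)\gtrsim(\tau_c-\tau_n)^{-1/2}$, while in the potential case you bootstrap through $|\int V\rho_n|$ and $\int\rho_n^{4/3}$ via the scale-free bound $|\int V\rho_n|\leq C(\int\rho_n^{4/3})^s$. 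Both arguments are valid; the paper's trick is more modular (it treats the energy estimates as black boxes and works uniformly for both cases), whereas your approach extracts more information along the way---in particular the two-sided control of $\int\rho_n^{4/3}$ and of $|\int V\rho_n|$, which the paper only records afterwards as \eqref{lower bound of V}. One small point worth making explicit: the refined bound $|\int V\rho_n|\leq C(\int\rho_n^{4/3})^s$ requires $L=(\int\rho_n^{4/3})^{-1}\leq1$ when passing from $s_i$ to $s$ in \eqref{ineq: bound V}; this is harmless since $|\int V\rho_n|\to\infty$ forces $\int\rho_n^{4/3}\to\infty$, but it is the reason ``$n$ sufficiently large'' is needed.
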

\begin{proof}
	We start with the proof of the upper bound in \eqref{ineq:estimate D^0} and \eqref{ineq:estimate D^s}. From \eqref{ineq:HLS} we see that it suffices to find an upper bound for $\int_{\mathbb{R}^3}\rho_{n}(x)^{\frac{4}{3}}{\rm d}x$. The upper bound in \eqref{ineq:estimate D^0} follows easily from \eqref{ineq:HLS} and the upper bound of $E^{\infty}_{\tau_{n}}(1)$ in \eqref{ineq:estimate e^0}. While the upper bound in \eqref{ineq:estimate D^s} follows from the upper bound of $E_{\tau_{n}}(1)$ in \eqref{ineq:estimate e^s} and \eqref{energy:lower-bound-V}, where we had chosen $L=M^{\frac{1}{s}}{M_2}^{-\frac{1}{s}}(\tau_{c}-\tau_{n})^{\frac{1}{1-s}}$ for $n$ sufficiently large. 
	
	Now let us only prove the lower bound in \eqref{ineq:estimate D^s}, since the proof of the lower bound in \eqref{ineq:estimate D^0} is analogous. For any $b$ such that $0\leq b\leq \tau_{n}$, we have
	\begin{equation}\label{ineq:direct term}
	E_{b}(1)\leq\mathcal{E}_{b}(\rho_{n})=E_{\tau_{n}}(1)+(\tau_{n}-b)D(\rho_{n},\rho_{n}).
	\end{equation}
	From \eqref{ineq:direct term} and \eqref{ineq:estimate e^s}, we deduce that there exist two positive constants
	$M_{1}>M_{2}$ such that for any $0<b<\tau_{n}<\tau_{c}$, 
	$$
	D(\rho_{n},\rho_{n})\geq\frac{E_{b}(1)-E_{\tau_{n}}(1)}{\tau_{n}-b}\geq\frac{-M_{1}\left(\tau_{c}-b\right)^{\frac{s}{s-1}}+M_{2}(\tau_{c}-\tau_{n})^{\frac{s}{s-1}}}{\tau_{n}-b}.
	$$
	Choosing $b=\tau_{n}-\beta(\tau_{c}-\tau_{n})$ with $\beta>0$, we arrive at 
	$$
	D(\rho_{n},\rho_{n}) \geq(\tau_{c}-\tau_{n})^{\frac{1}{s-1}}\frac{-M_{1}\left(1+\beta\right)^{\frac{s}{s-1}}+M_{2}}{\beta}.
	$$
	The last fraction is positive for $\beta$ large enough. Hence, for $\tau_{n}$ closes to $\tau_c$, there exists a positive constant $K_{1}$ such that
	$$
	D(\rho_{n},\rho_{n})\geq K_{1}(\tau_{c}-\tau_{n})^{\frac{s}{s-1}}.
	$$
\end{proof}

\begin{remark}
	 When $V$ is defined as in \eqref{external potential}, it follows from \eqref{ineq:estimate D^s} that $\int_{\mathbb{R}^3}\rho_{n}(x)^{\frac{4}{3}}{\rm d}x$ is large for $n$ sufficiently large. Hence, by taking $L^{-1} = \int_{\mathbb{R}^3}\rho_{n}(x)^{\frac{4}{3}}{\rm d}x$ in \eqref{ineq: bound V}, we obtain that there exists a constant $C>0$ such that 
	\begin{equation}\label{lower bound of V}
	\int_{\mathbb{R}^3}V(x)\rho_{n}(x){\rm d}x \geq -C\left(\int_{\mathbb{R}^3}\rho_{n}(x)^{\frac{4}{3}}{\rm d}x\right)^{s} \geq -C(\tau_{c}-\tau_{n})^{\frac{s}{s-1}},
	\end{equation}
	for $n$ sufficiently large.
\end{remark}

Now we are ready to complete the proof of Theorem \ref{thm:behavior}.

\begin{proof}[Proof of Theorem \ref{thm:behavior}] First, we focus on the case when $V$ is defined by \eqref{external potential}.

Let  $\epsilon_{n}:=(\tau_{c}-\tau_{n})^{\frac{1}{1-s}}>0$, we see that $\epsilon_{n}\to0$ as $n\to\infty$. For every $1\leq i\leq M$, we define the sequence of non-negative and $L^{1}(\mathbb R^3)$-normalized functions $w_{n}^{(i)}(x)=\epsilon_{n}^{3}\rho_{n}(\epsilon_{n}x+x_i)$. It follows from \eqref{ineq:HLS} and the upper bound of $E_{\tau_{n}}(1)$ in \eqref{ineq:estimate e^s} that there exists a positive constant $M_{2}$ such that
$$
\sum_{i=1}^{M}\int_{\mathbb{R}^3}\frac{z_{i}}{|x-x_i|^{s_{i}}}\rho_{n}(x){\rm d}x = -\int_{\mathbb{R}^3}V(x)\rho_{n}(x){\rm d}x \geq M_{2}\epsilon_{n}^{-s}.
$$
From this, we deduce that there exists $j$ verifying $1\leq j \leq M$ such that
\begin{equation}\label{kill s_{i}}
\epsilon_{n}^{s-s_{j}}\int_{\mathbb{R}^3}\frac{z_j}{|x|^{s_j}}w_{n}^{(j)}(x){\rm d}x = \epsilon_{n}^{s}\int_{\mathbb{R}^3}\frac{z_j}{|x-x_j|^{s_j}}\rho_{n}(x){\rm d}x \geq \frac{M_{2}}{M},
\end{equation}
which implies that $s_{j}=\max\{s_{i}:1\leq i\leq M\}=:s$. Otherwise, if $s_{j}<s$ then $\epsilon_{n}^{s-s_{j}}\to 0$ as $n\to\infty$, which contradicts \eqref{kill s_{i}}. Now, for such $j$, we deduce from \eqref{ineq:estimate D^s} that
\begin{equation}
D(w_{n}^{(j)},w_{n}^{(j)})>0,\label{ineq:vanishing1}
\end{equation}
and there exists a constant $C>0$ such that
$$\int_{\mathbb{R}^3}w_{n}^{(j)}(x)^{\frac{4}{3}}{\rm d}x= \epsilon_{n}\int_{\mathbb{R}^3}\rho_{n}(x)^{\frac{4}{3}}{\rm d}x\leq C.   
$$

Thus $\{w_{n}^{(j)}\}$ is bounded in $L^{\frac{4}{3}}(\mathbb{R}^{3})$, and hence there exists a subsequence of $\{w_{n}^{(j)}\}$ (still denote by $\{w_{n}^{(j)}\}$) such that $w_{n}^{(j)}\rightharpoonup w$ weakly in $L^{\frac{4}{3}}(\mathbb{R}^{3})$. Since $\rho_{n}$ is a non-negative minimizer of $E_{\tau_{n}}(1)$, it satisfies the following variational equations
\begin{equation}\label{eq:E-L-1}
\sqrt{\eta_{n}(x)^{2}+m^{2}}-\tau_{n}(|\cdot|^{-1}\star\rho_{n})(x)+V(x)-\mu_{n}\begin{cases}
=0 & \text{if }\rho_{n}(x)>0,\\
\geq0 & \text{if }\rho_{n}(x)=0.
\end{cases}
\end{equation} 
where $\eta_{n}=(6\pi^2\rho_{n}/q)^{\frac{1}{3}}$ and $\mu_{n}$ is Lagrange multiplier. In fact,
\begin{equation} \label{eq:mu_{n}}
\mu_{n}=\int_{\mathbb{R}^3}\sqrt{\eta_{n}(x)^{2}+m^2}\rho_{n}(x){\rm d}x-2\tau_{n}D(\rho_{n},\rho_{n})+\int_{\mathbb{R}^3}V(x)\rho_{n}(x){\rm d}x.
\end{equation}
We see that $w_{n}^{(j)}$ is a non-negative solution to
\begin{equation} \label{eq:E-L-2}
\sqrt{\zeta_{n}^{j}(x)^{2}+m^{2}\epsilon_n^2}-\tau_{n}(|\cdot|^{-1}\star w_{n}^{(j)})(x)+\epsilon_{n}V(\epsilon_{n}x+x_{j})-\epsilon_{n}\mu_{n}\begin{cases}
=0 & \text{if } w_{n}^{(j)}(x)>0,\\
\geq0 & \text{if } w_{n}^{(j)}(x)=0.
\end{cases}
\end{equation} 
where $\zeta_{n}^{j}=(6\pi^2 w_{n}^{(j)}/q)^{\frac{1}{3}}$. From the fact that
\begin{equation} \label{ineq:mu_{n}} j_m(\rho_{n}(x))\leq\sqrt{\eta_{n}(x)^{2}+m^2}\rho_{n}(x)\leq\frac{4}{3}j_m(\rho_{n}(x)),
\end{equation}
we have
$$
E_{\tau_{n}}(1)-\tau_{n}D(\rho_{n},\rho_{n})\leq \mu_{n}\leq \frac{4}{3}E_{\tau_{n}}(1)-\frac{2}{3}\tau_{n}D(\rho_{n},\rho_{n})-\frac{1}{3}\int_{\mathbb{R}^3}V(x)\rho_{n}(x){\rm d}x.
$$

Hence we deduce from \eqref{ineq:estimate e^s}, \eqref{ineq:estimate D^s} and \eqref{lower bound of V} that $\epsilon_{n}\mu_{n}$ is bounded
uniformly and strictly negative as $n\to\infty$. By passing to a subsequence if necessary, we can thus
assume that $\epsilon_{n}\mu_{n}$ converges to some number $-\alpha<0$ as $n\to\infty$. Passing \eqref{eq:E-L-2} to weak limit, we obtain that $w$ is a non-negative solution to
$$
\frac{4}{3}K_{\rm cl} w(x)^{\frac{1}{3}}-\tau_{c}(|\cdot|^{-1}\star w)(x)+\alpha\begin{cases}
=0 & \text{if } w(x)>0,\\
\geq0 & \text{if } w(x)=0.
\end{cases}
$$
By a simple rescaling we see that, $Q(x)=\lambda^{-3}w(\lambda^{-1}x+y_{0})$ is a non-negative solution of \eqref{eq:eq:LE} for $\lambda=\frac{3\alpha}{2\tau_{c}}$ and $y_{0}\in\mathbb{R}^3$. Now we claim that there exists a positive constant $R_0$ such that 
\begin{equation}
\liminf_{n\to\infty}\int_{B(0,R_0)}w_{n}^{(j)}(x){\rm d}x>0.\label{eq:vanishing2}
\end{equation}
On the contrary, we assume that for any $R>0$ there exists a subsequence of $\{\tau_{n}\}$ (still denoted by $\{\tau_{n}\}$), such that 
$$
\lim_{n\to\infty}\int_{B(0,R)}w_{n}^{(j)}(x){\rm d}x=0.
$$
Then by the same arguments in \cite[page 124]{Li-84} we can prove that $D(w_{n}^{(j)},w_{n}^{(j)})\to0$ as $n\to\infty$.
This contradicts \eqref{ineq:vanishing1}, hence \eqref{eq:vanishing2} holds true. It follows from \eqref{eq:vanishing2} and the weak limit $w_{n}^{(j)}\rightharpoonup w$ in $L^{\frac{4}{3}}(\mathbb{R}^{3})$ that
$$
\int_{B(0,R_0)}w(x){\rm d}x = \lim_{n\to\infty}\int_{B(0,R_0)}w_{n}^{(j)}(x){\rm d}x>0
$$
which implies that $w>0$ in $\mathbb{R}^3$. Hence $Q(x)>0$ in $\mathbb{R}^3$, and it solves the equation
$$
\frac{4}{3}\sigma_{f} Q(x)^{\frac{1}{3}}-(|\cdot|^{-1}\star Q)(x)+\frac{2}{3} = 0,
$$
which implies that
\begin{equation} \label{eq:Q}
\frac{2}{3}\sigma_{f} \|Q\|_{L^{\frac{4}{3}}}^{\frac{4}{3}}-D(Q,Q)+\frac{1}{3}\|Q\|_{L^1} = 0.
\end{equation}

We now prove that $Q$ is indeed an optimizer in \eqref{ineq:HLS}. Let $G$ be an optimizer in \eqref{ineq:HLS} with $\int_{\mathbb R^3} G(x){\rm d}x = 1$ and let $g(x)=\epsilon_{n}^{-3}G(\epsilon_{n}^{-1}x)$, then we have
\begin{align}
\epsilon_{n}\mathcal{E}_{\tau_{n}}(g) & \leq \left(1-\frac{\tau_{n}}{\tau_{c}}\right)K_{\rm cl}\int_{\mathbb{R}^3} G(x)^{\frac{4}{3}}{\rm d}x+\frac{9 m^{2}\epsilon^{2}_{n}}{16K_{\rm cl}}\int_{\mathbb{R}^3}G(x)^{\frac{2}{3}}{\rm d}x \nonumber \\
& \quad + \epsilon_{n}\int_{\mathbb{R}^3}V(\epsilon_{n}x)G(x){\rm d}x.\label{ineq:GN-min-1}
\end{align}
On the other hand, since $\mu_{n}$ satisfies \eqref{eq:E-L-2}, we deduce from \eqref{ineq:mu_{n}} that
\begin{align}
	\epsilon_{n}\mathcal{E}_{\tau_{n}}(\rho_{n}) & = \int_{\mathbb{R}^{3}}j_{m\epsilon_{n}}(w_{n}^{(j)}(x)){\rm d}x -\tau_{n} D(w_{n}^{(j)},w_{n}^{(j)}) + \epsilon_{n}\int_{\mathbb{R}^3} V(\epsilon_{n}x+x_{j})w_{n}^{(j)}(x){\rm d}x \nonumber \\
	&\geq \frac{3}{4}\int_{\mathbb{R}^{3}}w_{n}^{(j)}\sqrt{\zeta_{n}^2+m^2\epsilon_{n}^2}-\tau_{n} D(w_{n}^{(j)},w_{n}^{(j)}) + \epsilon_{n}\int_{\mathbb{R}^3} V(\epsilon_{n}x+x_{j})w_{n}^{(j)}(x){\rm d}x \nonumber\\
	& \geq \frac {3}{4}\epsilon_{n}\mu_{n} + \frac{1}{2}\tau_{n}D(w_{n}^{(j)},w_{n}^{(j)}) + \frac{1}{4}\epsilon_{n}\int_{\mathbb{R}^3} V(\epsilon_{n}x+x_{j})w_{n}^{(j)}(x){\rm d}x.\label{ineq:GN-min-2}
\end{align}	
Since $\rho_{n}$ is a minimizer of $E_{\tau_{n}}(1)$, we have $\mathcal{E}_{\tau_{n}}(\rho_{n})\leq\mathcal{E}_{\tau_{n}}(g)$ and hence
\begin{equation}\label{ineq:GN-min-3}
\liminf_{n\to \infty}\epsilon_{n}\mathcal{E}_{\tau_{n}}(\rho_{n})\leq\liminf_{n\to \infty}\epsilon_{n}\mathcal{E}_{\tau_{n}}(g).
\end{equation}
From \eqref{ineq:GN-min-1}, \eqref{ineq:GN-min-2}, \eqref{ineq:GN-min-3} and the fact that $D(\rho,\rho)$ is weakly lower semi-continuous, we deduce that 
$$
D(Q,Q)=\frac{1}{\lambda}D(\omega,\omega)\leq \frac{2}{3\alpha}\liminf_{n\to \infty}\tau_{n}D(w_{n}^{(j)},w_{n}^{(j)})\leq 1.
$$
Moreover, from \eqref{ineq:HLS}, \eqref{eq:Q} and the fact that $\|Q\|_{L^1}\leq 1$, we have 
$$
	D(Q,Q) = \frac{2}{3}\sigma_{f} \|Q\|_{L^{\frac{4}{3}}}^{\frac{4}{3}}+\frac{1}{3}\|Q\|_{L^1} \geq \left(\sigma_{f} \|Q\|_{L^{\frac{4}{3}}}^{\frac{8}{3}}\|Q\|_{L^1}\right)^{\frac{1}{3}}\geq  D(Q,Q)^{\frac{2}{3}}.
$$
This implies that $D(Q,Q)\geq 1$, and hence $D(Q,Q)=1$. From this, it is easy to see that $\sigma_{f} \|Q\|_{L^{\frac{4}{3}}}^{\frac{4}{3}}=1=\|Q\|_{L^1}$. Thus $Q$ is indeed an optimizer in \eqref{ineq:HLS}. Denote by $Q^*$ the symmetric-decreasing rearrangement of $Q$, then we have $\|Q\|_{L^p}=\|Q^*\|_{L^p}$ for all $1\leq p\leq \infty$. Hence, it follows from \eqref{ineq:HLS} and the Riesz's rearrangement inequality (see \cite[Theorem 3.7]{LiLo}) that
$$
1=\sigma_{f}\|Q^*\|_{L^1}^{\frac{2}{3}} \|Q^*\|_{L^{\frac{4}{3}}}^{\frac{4}{3}}\geq D(Q^*,Q^*)\geq D(Q,Q)=1.
$$
The equality occurs only if $Q(x)=Q^*(x-y)$ for some $y\in\mathbb{R}^3$ (see \cite[Theorem 3.9]{LiLo}). Thus, up to translation, $Q$ is a positive radially symmetric decreasing function which satisfies \eqref{cond:LE} and \eqref{eq:eq:LE}.

We shall show that $w_{n}^{(j)}\to w$ strongly in $L^{1} \cap L^{\frac{4}{3}}(\mathbb{R}^{3})$. We note that $\|w\|_{L^{1}}=\|Q\|_{L^{1}}=1$. From this norm preservation we have $\int_{\mathbb R^3}w_{n}^{(j)}(x){\rm d}x \to \int_{\mathbb R^3}w(x){\rm d}x$. Thus, we claim by the same arguments in \cite[page 125]{Li-84} that
$$
\lim_{n\to\infty}D(w_{n}^{(j)},w_{n}^{(j)})=D(w,w).
$$
We deduce from the above convergence and the inequality
$$
\epsilon_{n}\mathcal{E}_{\tau_{n}}(\epsilon_{n}^{-3}w_{n}^{(j)}(\epsilon_{n}^{-1}(x-x_{j}))) = \epsilon_{n}\mathcal{E}_{\tau_{n}}(\rho_{n}) \leq \epsilon_{n}\mathcal{E}_{\tau_{n}}(\epsilon_{n}^{-3}w(\epsilon_{n}^{-1}x))
$$
that
\begin{align*}
\limsup_{n\to\infty}K_{\rm cl}\int_{\mathbb{R}^3}w_{n}^{(j)}(x)^{\frac{4}{3}}{\rm d}x &\leq \limsup_{n\to\infty}\int_{\mathbb{R}^3}j_{m\epsilon_{n}}(w_{n}^{(j)}(x)){\rm d}x\\
& \leq \limsup_{n\to\infty}\int_{\mathbb{R}^3}j_{m\epsilon_{n}}(w(x)){\rm d}x = K_{\rm cl}\int_{\mathbb{R}^3}w(x)^{\frac{4}{3}}{\rm d}x.
\end{align*}
Since $w_{n}^{(j)}\rightharpoonup w$ weakly in $L^{\frac{4}{3}}(\mathbb{R}^{3})$, by Fatou's Lemma we have
$$
\liminf_{n\to\infty}\int_{\mathbb{R}^3}w_{n}^{(j)}(x)^{\frac{4}{3}}{\rm d}x\geq \int_{\mathbb{R}^3}w(x)^{\frac{4}{3}}{\rm d}x.
$$
Therefore we have proved that
\begin{equation}\label{conv:measure}
\lim_{n\to\infty}\int_{\mathbb{R}^3}w_{n}^{(j)}(x)^{\frac{4}{3}}{\rm d}x=\int_{\mathbb{R}^3}w(x)^{\frac{4}{3}}{\rm d}x.
\end{equation}
Since $\rho \mapsto \rho^{\frac{4}{3}}$ is strictly convex, we deduce from \eqref{conv:measure} that $w_{n}^{(j)}\to w$ in measure (see, e.g. \cite[page 126-127]{Li-84}). Thus, up to a subsequence, we have $w_{n}^{(j)}\to w$ pointwise almost everywhere in $\mathbb{R}^3$. Using this pointwise convergence, we deduce from the Brezis--Lieb refinement of Fatou's lemma (see, e.g. \cite[Theorem 1.9]{LiLo}) that
$$
\int_{\mathbb{R}^3}w_{n}^{(j)}(x)^{r}{\rm d}x = \int_{\mathbb{R}^3}w(x)^{r}{\rm d}x + \int_{\mathbb{R}^3}|w_{n}^{(j)}(x)-w(x)|^{r}{\rm d}x + o(1)_{n\to\infty}
$$
for $r=1$ or $r=\frac{4}{3}$. Therefore $\int_{\mathbb{R}^3}w_{n}^{(j)}(x)^{r}{\rm d}x \to \int_{\mathbb{R}^3}w(x)^{r}{\rm d}x$ implies that $w_{n}^{(j)}\to w$ strongly in $L^{1}\cap L^{\frac{4}{3}}(\mathbb{R}^3)$.

We have thus shown that there exists a subsequence of $\{\tau_{n}\}$ (still denoted by $\{\tau_{n}\}$) such that we have the following strong convergence in $L^{1}\cap L^{\frac{4}{3}}(\mathbb{R}^{3})$
$$
\lim_{n\to\infty}\epsilon_{n}^{3}\rho_{n}(\epsilon_{n} x+x_{j}) = \lim_{n\to\infty}w_{n}^{(j)}(x) = w(x) = \lambda^{3}Q(\lambda (x-y_{0})),
$$
where $\lambda>0$, $y_{0}\in\mathbb{R}^3$ and $Q$ is positive radially symmetric decreasing and optimizes the inequality \eqref{ineq:HLS}. To complete the proof of Theorem \ref{thm:behavior} (ii), we now determine the exact values of $\lambda$ and $y_{0}$. Since $\rho_{n}(x)=\epsilon_{n}^{-3}w_{n}^{(j)}(\epsilon_{n}^{-1}(x-x_{j}))$ is a minimizer of $E_{\tau_{n}}(1)$ we have, by using \eqref{ineq:fake-kinetic} and \eqref{ineq:HLS}, that
\begin{align}
	E_{\tau_{n}}(1) & \geq \int_{\mathbb{R}^3}\tilde{j}_{m}(\rho_{n}(x)){\rm d}x + \epsilon_{n}^{1-s}D(\rho_{n},\rho_{n})+\int_{\mathbb{R}^3}V(x)\rho_{n}(x){\rm d}x \nonumber \\
	& \geq \epsilon_{n}\int_{\mathbb{R}^3}\tilde{j}_{m\epsilon_{n}}(w_{n}^{(j)}(x)){\rm d}x+ \epsilon_{n}^{-s}D(w_{n}^{(j)},w_{n}^{(j)})+\int_{\mathbb{R}^3}V(\epsilon_{n}x+x_{j})w_{n}^{(j)}(x){\rm d}x.\label{liminf:E}
\end{align}
From the identity \eqref{fake-kinetic} for $\tilde{j}_{m\epsilon_{n}}(w_{n}^{(j)}(x))$ and Fatou's Lemma, we have 
\begin{align}\label{liminf:kinetic}
\liminf_{n\to\infty}\int_{\mathbb{R}^3}\tilde{j}_{m\epsilon_{n}}(w_{n}^{(j)}(x)){\rm d}x\geq\frac{q}{4\pi^2}\int_{\mathbb{R}^3}\tilde{\eta}(x)^{2}{\rm d}x =\frac{9}{8K_{\rm cl}\lambda}\int_{\mathbb{R}^3}Q(x)^{\frac{2}{3}}{\rm d}x,
\end{align}
where $\tilde{\eta}=(6\pi^2 w/q)^{\frac{1}{3}}$. By the Hardy--Littlewood--Sobolev inequality (see \cite[Theorem 4.3]{LiLo}), we have
\begin{align}\label{liminf:D}
\lim_{n\to\infty}D(w_{n}^{(j)},w_{n}^{(j)})=D(w,w)=\lambda D(Q,Q)=\lambda.
\end{align}
On the other hand, we have
\begin{align}\label{liminf:V}
\lim_{n\to\infty}\epsilon_{n}^{s}\int_{\mathbb{R}^3}V(\epsilon_{n}x+x_{j})w_{n}^{(j)}(x){\rm d}x =-z_{j}\int_{\mathbb{R}^3}\frac{w(x)}{|x|^s}{\rm d}x & = -\lambda^{s}z_{j}\int_{\mathbb{R}^3}\frac{Q(x+y_0)}{|x|^{s}}{\rm d}x \nonumber \\
& \geq -\lambda^{s}z\int_{\mathbb{R}^3}\frac{Q(x)}{|x|^{s}}{\rm d}x
\end{align}
since $Q$ is a radial decreasing function and $z=\max\{z_{i}:s_{i}=s\}$. It follows from \eqref{liminf:E}, \eqref{liminf:D} and \eqref{liminf:V} that
$$
\liminf_{n\to\infty}\frac{E_{\tau_{n}}(1)}{\epsilon_{n}^{-s}}\geq \lambda -\lambda^{s} z\int_{\mathbb{R}^3}\frac{Q(x)}{|x|^{s}}{\rm d}x.
$$
Thus, taking the infimum over $\lambda>0$ we obtain
\begin{equation}\label{liminf: e/epsilon}
\liminf_{n\to\infty}\frac{E_{\tau_{n}}(1)}{\epsilon_{n}^{-s}} \geq \left(1-\frac{1}{s}\right) \left(sz\int_{\mathbb{R}^3}\frac{Q(x)}{|x|^{s}}{\rm d}x\right)^{\frac{1}{1-s}}.
\end{equation}
To see the matching upper bound in \eqref{liminf: e/epsilon}, one simply takes 
$$
\rho_{n}(x)=(\tilde{\lambda}\epsilon_{n}^{-1})^{3}Q(\tilde{\lambda}\epsilon_{n}^{-1}(x-x_i))
$$
as trial state for $\mathcal{E}_{\tau_{n}}$, where $\tilde{\lambda}>0$ and $x_i\in\mathcal{Z}$, i.e. $s_{i}=s$ and $z_{i}=z$. We use \eqref{ineq:HLS} and the fact that $j_{m}(\rho_{n})\leq K_{\rm cl}\rho_{n}^{\frac{4}{3}}+\frac{9}{16}m^2K_{\rm cl}^{-1}\rho_{n}^{\frac{2}{3}}$ we obtain
\begin{align*}
	E_{\tau_{n}}(1)
	&\leq \frac{9m^2}{16K_{\rm cl}}\int_{\mathbb{R}^3}\rho_{n}(x)^{\frac{2}{3}}{\rm d}x + (\tau_c - \tau_{n})D(\rho_{n},\rho_{n}) + \int_{\mathbb{R}^3}V(x)\rho_{n}(x){\rm d}x\\
	&=\frac{9m^2\epsilon_{n}}{16K_{\rm cl}\tilde{\lambda}}\int_{\mathbb{R}^3}Q(x)^{\frac{2}{3}}{\rm d}x + \tilde{\lambda}\epsilon_{n}^{-s} + \int_{\mathbb{R}^3}V(\epsilon_{n}\tilde{\lambda}^{-1}x+x_i)Q(x){\rm d}x.
\end{align*}
This implies that
$$
\limsup_{n\to\infty}\frac{E_{\tau_{n}}(1)}{\epsilon_{n}^{-s}}\leq \tilde{\lambda}-\tilde{\lambda}^{s}z\int_{\mathbb{R}^3}\frac{Q(x)}{|x|^{s}}{\rm d}x.
$$
Thus, taking the infimum over $\tilde{\lambda}>0$ we see that 
\begin{equation}\label{limsup: e/epsilon}
\limsup_{n\to\infty}\frac{E_{\tau_{n}}(1)}{\epsilon_{n}^{-s}}\leq \left(1-\frac{1}{s}\right) \left(sz\int_{\mathbb{R}^3}\frac{Q(x)}{|x|^{s}}{\rm d}x\right)^{\frac{1}{1-s}}.
\end{equation}

From \eqref{liminf: e/epsilon} and \eqref{limsup: e/epsilon} we conclude that $z_{j} = \max\{z_{i}:s_{i}=s\} =: z$,
$$
\lambda = \left(sz\int_{\mathbb{R}^3}\frac{Q(x)}{|x|^{s}}{\rm d}x\right)^{\frac{1}{1-s}} \quad \text{and} \quad \lim_{n\to\infty}\frac{E_{\tau_{n}}(1)}{\epsilon_{n}^{-s}} = \left(1-\frac{1}{s}\right) \left(sz\int_{\mathbb{R}^3}\frac{Q(x)}{|x|^{s}}{\rm d}x\right)^{\frac{1}{1-s}}.
$$
We note that the limit of $E_{\tau_{n}}(1)/\epsilon_{n}^{-s}$ is independent of the subsequence $\tau_{n}$. Therefore, we have the convergence of the whole family of $\tau_{n}$ in \eqref{lim:e(tau)/epsilon^s}. Finally, since the limit in \eqref{conv:minimizer-V} is unique, if $\mathcal{Z}$ has a unique element, then we obtain the convergence \eqref{conv:minimizer-V} for the whole sequence of $\tau_{n}$ by a standard argument.

Now we come to the case when $V\equiv 0$. In this case, we define $\tilde{w}_{n}(x)=\epsilon_{n}^{3}\rho_{n}(\epsilon_{n}x)$ where $\epsilon_{n}:=(\tau_{c}-\tau_{n})^{\frac{1}{2}}$. It follows from \eqref{ineq:estimate D^0} that $D(\tilde{w}_{n},\tilde{w}_{n})>0$. By the same arguments as in \cite[page 124]{Li-84} we can prove that there exists a sequence $\{x_n\}\subset\mathbb{R}^3$ and a positive constant $R_1$ such that 
$$
\liminf_{n\to\infty}\int_{B(x_n,R_1)}\tilde{w}_{n}(x){\rm d}x>0.
$$
By the same arguments as we have done before for the case $V\not\equiv 0$ , we can prove that there exists a subsequence of $\{\tau_{n}\}$ (still denoted by $\{\tau_{n}\}$) and a sequence $\{x_n\}\subset\mathbb{R}^3$ such that we have the following strong convergence in $L^{1}\cap L^{\frac{4}{3}}(\mathbb{R}^{3})$
$$
\lim_{n\to\infty}\epsilon_{n}^{3}\rho_{n}(\epsilon_{n}x+\epsilon_{n}x_{n}) = \lim_{n\to\infty}\tilde{w}_{n}(x+x_n) = w(x) = \lambda_{\infty}^{3}Q(\lambda_{\infty}x),
$$
where $\lambda_{\infty}>0$ and $Q$ is positive radially symmetric decreasing and optimizes the inequality \eqref{ineq:HLS}. It remains to compute the exact value of $\lambda_{\infty}$, which is the consequence of the computation $\lim_{n\to\infty} E^{\infty}_{\tau_{n}}(1)/\epsilon_{n}$. The lower bound follows from \eqref{liminf:E}, \eqref{liminf:kinetic} and \eqref{liminf:D}, while the upper bound is done by taking the trial state
$$
\rho_{n}(x)=(\tilde{\lambda}\epsilon_{n}^{-1})^{3}Q(\tilde{\lambda}\epsilon_{n}^{-1}x),
$$
and by optimizing the quantity $\limsup_{n\to\infty} E^{\infty}_{\tau_{n}}(1)/\epsilon_{n}$ over all $\tilde{\lambda}$. Here $\tilde{\lambda}>0$ and $Q$ is positive radially symmetric decreasing and optimizes the inequality \eqref{ineq:HLS}. In summary, we conclude that 
$$
\lambda_{\infty} = \frac{3}{4}m\left(\frac{1}{K_{\rm cl}}\int_{\mathbb{R}^3}Q(x)^{\frac{2}{3}}{\rm d}x\right)^{\frac{1}{2}} \quad \text{and} \quad 
\lim_{n\to\infty} \frac{E^{\infty}_{\tau_{n}}(1)}{\epsilon_{n}} = \frac{3}{2}m\left(\frac{1}{K_{\rm cl}}\int_{\mathbb{R}^3}Q(x)^{\frac{2}{3}}{\rm d}x\right)^{\frac{1}{2}}.
$$
Since the limit of $E^{\infty}_{\tau_{n}}(1)/\epsilon_{n}$ is independent of the subsequence of $\tau_{n}$, we have the convergence of the whole family of $\tau_{n}$ in \eqref{lim:e(tau)/epsilon^0}. The proof is complete.
\end{proof}

\medskip
{\bf Acknowledgement.}
The author would like to thank Phan Th\`{a}nh Nam for helpful discussions. He also thanks the referee for helpful comments on the manuscript.

\end{document}